\definecolor{labelkey}{rgb}{0,0.08,0.45}
\definecolor{refkey}{rgb}{0,0.6,0.0}
\definecolor{Brown}{rgb}{0.45,0.0,0.05}
\definecolor{lime}{rgb}{0.00,0.8,0.0}
\definecolor{lblue}{rgb}{0.5,0.5,0.99}
\definecolor{myblue}{rgb}{.9, .9, 1}
  \newcommand*\mybluebox[1]{%
    \colorbox{myblue}{\hspace{1em}#1\hspace{1em}}}
\newcommand{\sepp}{\setlength{\itemsep}{-2pt}}
\newcommand{\menge}[2]{\big\{{#1}~\big |~{#2}\big\}} 
\newcommand{\nnn}{\ensuremath{{n\in{\mathbb N}}}}
\newcommand{\ntoinf}{\ensuremath{{n\to\infty}}}
\newcommand{\ve}{\ensuremath{\varepsilon}}
\newcommand{\scal}[2]{\left\langle{#1},{#2}  \right\rangle}
\newcommand{\exi}{\ensuremath{\exists\,}}
\newcommand{\zeroun}{\ensuremath{\left]0,1\right[}}
\newcommand{\RR}{\ensuremath{\mathbb R}}
\newcommand{\RP}{\ensuremath{\mathbb{R}_+}}
\newcommand{\RPP}{\ensuremath{\mathbb{R}_{++}}}
\newcommand{\RMM}{\ensuremath{\mathbb{R}_{--}}}
\newcommand{\RX}{\ensuremath{\,\left]-\infty,+\infty\right]}}
\newcommand{\NN}{\ensuremath{\mathbb N}}
\newcommand{\dom}{\ensuremath{\operatorname{dom}}}
\newcommand{\reli}{\ensuremath{\operatorname{ri}}}
\newcommand{\inte}{\ensuremath{\operatorname{int}}}
\newcommand{\epi}{\ensuremath{\operatorname{epi}}}
\newcommand{\Fix}{\ensuremath{\operatorname{Fix}}}
\newcommand{\Id}{\ensuremath{\operatorname{Id}}}
\newcommand{\pinf}{\ensuremath{+\infty}}
\newtheorem{theorem}{Theorem}[section]
\newtheorem{corollary}[theorem]{Corollary}
\newtheorem{proposition}[theorem]{Proposition}
\newtheorem{definition}[theorem]{Definition}
\theoremstyle{plain}{\theorembodyfont{\rmfamily}
}
\theoremstyle{plain}{\theorembodyfont{\rmfamily}
}
\theoremstyle{plain}{\theorembodyfont{\rmfamily}
}
\theoremstyle{plain}{\theorembodyfont{\rmfamily}
\newtheorem{example}[theorem]{Example}}
\newtheorem{fact}[theorem]{Fact}
\theoremstyle{plain}{\theorembodyfont{\rmfamily}
\newtheorem{remark}[theorem]{Remark}}
\newenvironment{proof}[1][Proof]{\par
	\vspace{-12pt}
	\trivlist
	\item[\hskip\labelsep\itshape #1.]\ignorespaces
	}{%
	\hfill\ensuremath{\blacksquare}\endtrivlist
}
\begin{document}

\title{Proximal point algorithm, Douglas--Rachford algorithm and alternating
projections: a case study}

\author{
Heinz H.\ Bauschke\thanks{
Mathematics, University of British Columbia, Kelowna, B.C.\ V1V~1V7, Canada. 
E-mail: \texttt{heinz.bauschke@ubc.ca}.},~
Minh N.\ Dao\thanks{
Department of Mathematics and Informatics, Hanoi National University of Education, 136 Xuan Thuy, Hanoi, Vietnam,
and Mathematics, University of British Columbia, Kelowna, B.C.\ V1V~1V7, Canada.
E-mail: \texttt{minhdn@hnue.edu.vn}.},~
Dominikus Noll\thanks{
Institut de Math{\'e}matiques, Universit{\'e} de Toulouse, 118 route de Narbonne, 31062 Toulouse, France.
E-mail: \texttt{noll@mip.ups-tlse.fr}.}~~~and~
Hung M.\ Phan\thanks{
Department of Mathematical Sciences, University of Massachusetts Lowell,
265 Riverside St., Olney Hall 428, Lowell, MA 01854, USA. 
E-mail: \texttt{hung\_phan@uml.edu}.}
}

\date{January 26, 2015}

\maketitle

\begin{abstract} \noindent
Many iterative methods for solving optimization or feasibility problems
have been invented, and often convergence of the iterates to some solution is proven.
Under favourable conditions, one might have additional bounds on the
distance of the iterate to the solution leading thus to 
\emph{worst case estimates}, i.e., how fast the algorithm must converge.

Exact convergence estimates are typically hard to come by.
In this paper, we consider the complementary problem of finding 
\emph{best case estimates}, i.e., how slow the algorithm has to converge,
and we also study 
\emph{exact asymptotic rates of convergence}. Our investigation focuses on convex feasibility
in the Euclidean plane, where one set is the real axis while the other
is the epigraph of a convex function. This case study allows us to obtain
various convergence rate results. We focus on the popular method of
alternating projections and the Douglas--Rachford algorithm. These methods
are connected to the proximal point algorithm which is also
discussed. Our findings suggest
that the Douglas--Rachford algorithm outperforms the method of alternating
projections in the absence of constraint qualifications. Various examples
illustrate the theory. 
\end{abstract}

{\small
\noindent
{\bfseries 2010 Mathematics Subject Classification:}
{Primary 65K05; Secondary 65K10, 90C25.
}

\noindent {\bfseries Keywords:}
alternating projections, 
convex feasibility problem, 
convex set,
Douglas--Rachford algorithm, 
projection, 
proximal mapping,
proximal point algorithm,
proximity operator. 
}

\section{Introduction}

\subsection*{Three algorithms}

Let $X$ be a Euclidean space, with inner product $\scal{\cdot}{\cdot}$ and
induced norm $\|\cdot\|$, and let $f\colon X\to\RX$ be convex, lower
semicontinuous, and proper. A classical method for finding a minimizer of
$f$ is the \emph{proximal point algorithm (PPA)}. 
It requires using the \emph{proximal point mapping} (or proximity operator)
which was pioneered by Moreau \cite{Moreau}:

\begin{fact}[proximal mapping] 
\label{f:prox}
For every $x\in X$, there exists a unique point
$p=P_f(x)\in X$ such that 
$\min_{y\in X} f(y) + \tfrac{1}{2}\|x-y\|^2 =
f(p)+\tfrac{1}{2}\|x-p\|^2$.
The induced operator $P_f\colon X\to X$ is firmly
nonexpansive\footnote{
Note that if $f=\iota_C$ is the \emph{indicator function} of a nonempty
closed convex subset of $X$, then $P_f = P_C$, where the $P_C$ is the
nearest point mapping or \emph{projector} of $C$; the corresponding
\emph{reflector} is $R_C = 2P_C-\Id$.}, i.e., 
$(\forall x\in X)(\forall y \in X)$
$\|P_f(x)-P_f(y)\|^2 + \|(\Id-P_f)x-(\Id-P_f)y\|^2 \leq \|x-y\|^2$. 
\end{fact}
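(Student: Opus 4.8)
The plan is to establish the three assertions in turn: existence of a minimizer, its uniqueness, and firm nonexpansiveness of the resulting operator.

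\emph{Existence and uniqueness.} Fix $x\in X$ and set $g:=f+\tfrac12\|x-\cdot\|^2$. First I would note that $g$ is proper and lower semicontinuous, being the sum of the proper lsc function $f$ and a continuous one. Since $f$ is proper, lsc, and convex, it is minorized by some continuous affine function $y\mapsto\scal{a}{y}+\beta$, so $g(y)\geq\scal{a}{y}+\beta+\tfrac12\|x-y\|^2\to+\infty$ as $\|y\|\to\infty$; hence $g$ is coercive and, being lsc on a finite-dimensional space, attains its infimum at some $p$. Uniqueness is immediate once we observe that $y\mapsto\tfrac12\|x-y\|^2$ is strictly (indeed strongly) convex while $f$ is convex, so $g$ is strictly convex and has at most one minimizer. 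This yields the well-defined map $P_f$.

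\emph{Firm nonexpansiveness.} The crux is the variational characterization: $p=P_f(x)$ if and only if $x-p\in\partial f(p)$, i.e.,
\[
(\forall y\in X)\qquad f(y)\ \geq\ f(p)+\scal{x-p}{y-p}.
\]
To derive the direction I need, fix $y$ and $t\in\zeroun$, put $y_t:=(1-t)p+ty$, and combine optimality of $p$ with convexity of $f$ to get $f(p)+\tfrac12\|x-p\|^2\leq(1-t)f(p)+tf(y)+\tfrac12\|x-y_t\|^2$; expanding $\|x-y_t\|^2=\|x-p\|^2-2t\scal{x-p}{y-p}+t^2\|y-p\|^2$, cancelling, dividing by $t$, and letting $t\downarrow0$ gives the claimed inequality. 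Now take $x,\bar x\in X$, write $u:=P_f(x)$ and $v:=P_f(\bar x)$, apply the characterization to the pair $(x,u)$ at the point $v$ and to the pair $(\bar x,v)$ at the point $u$, and add the two inequalities; after cancelling $f(u)$ and $f(v)$ this produces
\[
\scal{(x-u)-(\bar x-v)}{u-v}\ \geq\ 0,\qquad\text{equivalently}\qquad \scal{x-\bar x}{u-v}\ \geq\ \|u-v\|^2.
\]
Finally, with $a:=u-v$ and $b:=x-\bar x$ one has $(\Id-P_f)x-(\Id-P_f)\bar x=b-a$, so
\[
\|P_fx-P_f\bar x\|^2+\|(\Id-P_f)x-(\Id-P_f)\bar x\|^2=2\|a\|^2-2\scal{a}{b}+\|b\|^2\leq\|b\|^2=\|x-\bar x\|^2,
\]
the last inequality being exactly $\|a\|^2\leq\scal{a}{b}$ from the previous display.

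\emph{Main obstacle.} Honestly there is no deep obstacle here; the result is classical and the argument is short. The only steps needing a little care are the passage to the limit $t\downarrow0$ in the perturbation argument that identifies $\partial g$ at $p$ (equivalently, Fermat's rule applied to $g$), and invoking the affine minorant of $f$ to secure coercivity in the existence part. Everything else is elementary algebra and convexity bookkeeping.
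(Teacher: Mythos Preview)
Your proof is correct and complete: the coercivity-plus-strict-convexity argument for existence and uniqueness is standard, and your derivation of the subgradient inequality $x-P_f(x)\in\partial f(P_f(x))$ followed by the monotonicity step and the algebraic identity is exactly the canonical route to firm nonexpansiveness. Note, however, that the paper does not actually prove this statement---it is recorded as a \emph{Fact} attributed to Moreau \cite{Moreau} (and implicitly to \cite{BC2011}), with no proof given---so there is nothing in the paper to compare your argument against. Your write-up simply supplies the classical proof the authors chose to cite rather than reproduce.
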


The proximal point algorithm was proposed by Martinet \cite{Martinet}
and further studied by Rockafellar \cite{Rockprox}. 
Nowadays numerous extensions exist; however, here we focus only
on the most basic instance of PPA:

\begin{fact}[proximal point algorithm (PPA)]
\label{f:ppa}
Let $f\colon X\to\RX$ be convex, lower semicontinuous, and
proper. Suppose that $Z$, the set of minimizers of $f$, is
nonempty, and let $x_0\in X$. Then the sequence
generated by
\begin{equation}
(\forall\nnn)\quad x_{n+1} = P_f(x_n)
\end{equation}
converges to a point in $Z$ and it satisfies
\begin{equation}
(\forall z\in Z)(\forall\nnn) \quad
\|x_{n+1}-z\|^2 + \|x_n-x_{n+1}\|^2 \leq \|x_n-z\|^2.
\end{equation}
\end{fact}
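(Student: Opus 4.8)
The plan is to recognize the PPA iteration as a Fejér-monotone sequence relative to $\Fix P_f$ and then run the standard convergence argument for such sequences, using only that $P_f$ is firmly nonexpansive (Fact~\ref{f:prox}). The first step is to identify $Z$ with $\Fix P_f$: by Fermat's rule applied to the strongly convex function $y\mapsto f(y)+\tfrac12\|x-y\|^2$, one has $p = P_f(x)$ if and only if $x-p\in\partial f(p)$; specializing to $x=p$ shows that $p = P_f(p)$ precisely when $0\in\partial f(p)$, i.e., $p\in Z$. Hence $\Fix P_f = Z\neq\varnothing$, and $(x_n)_\nnn$ is the fixed-point iteration of the firmly nonexpansive operator $P_f$.

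The announced inequality is then immediate: fix $z\in Z$ and apply the firm nonexpansiveness inequality of Fact~\ref{f:prox} to the pair $(x_n,z)$. Since $P_f z = z$ we have $(\Id-P_f)z = 0$, so
\[
\|x_{n+1}-z\|^2+\|x_n-x_{n+1}\|^2 = \|P_f x_n - P_f z\|^2 + \|(\Id-P_f)x_n - (\Id-P_f)z\|^2 \le \|x_n-z\|^2 .
\]
In particular $(\|x_n-z\|)_\nnn$ is nonincreasing for every $z\in Z$, so $(x_n)_\nnn$ is bounded; and telescoping the inequality gives $\sum_\nnn\|x_n-x_{n+1}\|^2\le\|x_0-z\|^2<\infty$, whence $(\Id-P_f)x_n = x_n-x_{n+1}\to 0$.

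It remains to upgrade boundedness and this asymptotic regularity to convergence to a point of $Z$. Let $\bar x$ be a cluster point, say $x_{k_n}\to\bar x$; since $\Id-P_f$ is Lipschitz continuous, $(\Id-P_f)\bar x = \lim_n(\Id-P_f)x_{k_n} = 0$, so $\bar x\in\Fix P_f = Z$. If $\bar x$ and $\tilde x$ are two cluster points, then both lie in $Z$, hence $\lim_n\|x_n-\bar x\|$ and $\lim_n\|x_n-\tilde x\|$ both exist; consequently $\lim_n\big(\|x_n-\bar x\|^2-\|x_n-\tilde x\|^2\big)=\lim_n\big(2\scal{x_n}{\tilde x-\bar x}+\|\bar x\|^2-\|\tilde x\|^2\big)$ exists, and evaluating this limit along the subsequences converging to $\bar x$ and to $\tilde x$ yields $-\|\bar x-\tilde x\|^2=\|\bar x-\tilde x\|^2$, forcing $\bar x=\tilde x$. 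Thus the bounded sequence $(x_n)_\nnn$ has exactly one cluster point and therefore converges to it, a point of $Z$. The only genuinely fiddly part is this last uniqueness-of-limit computation; everything else is a direct consequence of firm nonexpansiveness, and the finite dimensionality of $X$ spares us from having to invoke weak convergence together with a demiclosedness principle.
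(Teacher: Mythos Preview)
Your proof is correct, but note that the paper does not actually prove Fact~\ref{f:ppa}: it is stated as a known background result (attributed to Martinet and Rockafellar) and no argument is supplied. So there is nothing in the paper to compare your proof against.

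That said, your argument is the standard one and is sound. The identification $\Fix P_f = Z$ via $x-p\in\partial f(p)$ is correct, the displayed inequality drops out immediately from firm nonexpansiveness with $P_f z = z$, and the Fej\'er-monotone endgame (boundedness, asymptotic regularity, cluster points lie in $Z$, uniqueness of the cluster point via the polarization identity) is carried out cleanly. Your use of finite-dimensionality to pass directly from $x_{k_n}\to\bar x$ and $(\Id-P_f)x_{k_n}\to 0$ to $(\Id-P_f)\bar x = 0$ by continuity is appropriate here, since the paper takes $X$ to be Euclidean; in a general Hilbert space one would instead invoke weak sequential cluster points and the demiclosedness of $\Id-P_f$ at $0$, obtaining only weak convergence.
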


An ostensibly quite different type of optimization problem is, 
for two given closed convex nonempty subsets $A$ and $B$ of $X$,
to find a point in $A\cap B\neq\varnothing$. 
Let us present two fundamental algorithms for solving this convex
feasibility problem. The first method was proposed by Bregman
\cite{Bregman}. 

\begin{fact}[method of alternating projections (MAP)]
\label{f:map}
Let $a_0\in A$ and set
\begin{equation}
(\forall\nnn)\quad a_{n+1} = P_AP_B(a_n). 
\end{equation}
Then $(a_n)_\nnn$ converges to a point $a_\infty\in C = A\cap B$. 
Moreover, 
\begin{equation}
(\forall c\in C)(\forall\nnn)\quad
\|a_{n+1}-c\|^2 + \|a_{n+1}-P_Ba_n\|^2 + \|P_Ba_n-a_n\|^2 \leq
\|a_n-c\|^2.
\end{equation}
\end{fact}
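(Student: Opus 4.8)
The plan is to derive the displayed three-term inequality first — it is the engine that drives everything else — and then run the standard Fej\'er-monotonicity / asymptotic-regularity argument to obtain convergence and to identify the limit.

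\textbf{Step 1: the inequality.} Fix $c\in C=A\cap B$ and abbreviate $b_n:=P_Ba_n$, so that $a_{n+1}=P_Ab_n$. Since $c\in B$ we have $P_Bc=c$, hence $(\Id-P_B)c=0$; applying the firm nonexpansiveness of $P_B$ from Fact~\ref{f:prox} to the pair $(a_n,c)$ gives
\begin{equation}
\|b_n-c\|^2+\|a_n-b_n\|^2\le\|a_n-c\|^2.
\end{equation}
Likewise $c\in A$ yields $P_Ac=c$, and firm nonexpansiveness of $P_A$ applied to $(b_n,c)$ gives
\begin{equation}
\|a_{n+1}-c\|^2+\|b_n-a_{n+1}\|^2\le\|b_n-c\|^2.
\end{equation}
Adding these and cancelling $\|b_n-c\|^2$ produces exactly the claimed estimate, since $b_n=P_Ba_n$. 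This step is routine and uses only Fact~\ref{f:prox}.

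\textbf{Step 2: consequences of the inequality.} The estimate shows that $(\|a_n-c\|)_\nnn$ is nonincreasing, hence convergent, so in particular $(a_n)_\nnn$ is bounded and Fej\'er monotone with respect to $C$. Telescoping the remainder terms over $n$ gives $\sum_{\nnn}\bigl(\|a_{n+1}-P_Ba_n\|^2+\|P_Ba_n-a_n\|^2\bigr)\le\|a_0-c\|^2<\pinf$, whence $\|a_{n+1}-P_Ba_n\|\to 0$ and $\|P_Ba_n-a_n\|\to 0$; by the triangle inequality $\|a_{n+1}-a_n\|\to 0$ as well (asymptotic regularity).

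\textbf{Step 3: identifying and reaching the limit.} By boundedness, pass to a subsequence $a_{k_n}\to a_\infty$. Since $a_n\in A$ for $n\ge 1$ and $A$ is closed, $a_\infty\in A$; and since $\|P_Ba_n-a_n\|\to 0$, also $P_Ba_{k_n}\to a_\infty$, so $a_\infty\in B$ because $P_Ba_n\in B$ and $B$ is closed. Thus $a_\infty\in C$, so Step~1 applies with the specific choice $c=a_\infty$: the sequence $(\|a_n-a_\infty\|)_\nnn$ converges, and since it tends to $0$ along the subsequence $(k_n)$, it tends to $0$; hence $a_n\to a_\infty\in C$. I do not expect a genuine obstacle here: once the inequality of Step~1 is available, Steps~2--3 are the textbook scheme. (An alternative is to observe that $T=P_AP_B$ is an averaged nonexpansive operator with $\Fix T=A\cap B$ when $A\cap B\neq\varnothing$ and to invoke a Krasnosel'ski\u\i--Mann-type convergence theorem, but the self-contained route above relies only on Fact~\ref{f:prox}.)
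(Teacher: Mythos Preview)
Your argument is correct and self-contained: Step~1 derives the three-term inequality from two applications of firm nonexpansiveness (Fact~\ref{f:prox}), and Steps~2--3 carry out the standard Fej\'er-monotone scheme, which works here because $X$ is Euclidean (so bounded sequences have convergent subsequences). The paper itself does not prove Fact~\ref{f:map}; it is recorded as a known result attributed to Bregman~\cite{Bregman}, so there is no ``paper's own proof'' to compare against --- your write-up supplies exactly the elementary justification one would expect.
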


The second method is the celebrated Douglas--Rachford algorithm. 
The next result can be deduced by combining \cite{LM} and \cite{josa}. 

\begin{fact}[Douglas--Rachford algorithm (DRA)]
\label{f:dra}
Set $T = \Id-P_A+P_BR_A$, 
let $z_0\in X$, and set
\begin{equation}
(\forall\nnn)\quad a_n = P_Az_n 
\;\;\text{and}\;\;
z_{n+1} = Tz_n.
\end{equation}
Then\footnote{Here $\Fix T = \menge{x\in X}{x=Tx}$ is the set of fixed
points of $T$, and $N_{A-B}(0)$ stands for the normal cone of the set $A-B
= \menge{a-b}{a\in A,b\in B}$ at $0$.}
$(z_n)_\nnn$ converges to some point in $z_\infty \in \Fix T = (A\cap
B)+N_{A-B}(0)$,
and $(a_n)_\nnn$ converges to $P_Az_\infty\in A\cap B$. 
\end{fact}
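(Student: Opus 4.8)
The plan is to reduce the whole statement to a single clean fixed point of a firmly nonexpansive operator and then invoke the standard Krasnosel'ski\u{\i}--Mann convergence machinery. First I would recall that the reflectors $R_A = 2P_A-\Id$ and $R_B = 2P_B-\Id$ are nonexpansive (this is immediate from firm nonexpansiveness of the projectors in Fact~\ref{f:prox}), and rewrite the Douglas--Rachford operator in its more symmetric form $T = \tfrac{1}{2}(\Id + R_BR_A)$. A short computation confirms $T = \Id - P_A + P_BR_A$, so this is merely a convenient bookkeeping step. Since $R_BR_A$ is a composition of nonexpansive maps, it is nonexpansive, and hence $T$, being the average of $\Id$ and a nonexpansive map, is firmly nonexpansive; in particular $\Id - T = P_A - P_BR_A$ is also firmly nonexpansive, which will be the source of the asymptotic regularity $z_n - Tz_n \to 0$.

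Next I would identify $\Fix T$. On one hand, $z \in \Fix T$ means $R_BR_Az = z$, i.e. $R_A z$ is a fixed point of $R_B$; unwinding the reflectors one checks that this is equivalent to the existence of $x\in A\cap B$ with $z - x \in N_A(x)\cap(-N_B(x))$, and since $A$ and $B$ are affine-like enough here (indeed for general convex sets one still gets the description) one rewrites the condition on the normal cones as $z - x \in N_{A-B}(0)$. This gives $\Fix T = (A\cap B) + N_{A-B}(0)$; the inclusion $A\cap B \subseteq \Fix T$ (using $0\in N_{A-B}(0)$) already shows $\Fix T \neq \varnothing$ because $A\cap B \neq \varnothing$ by hypothesis. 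Along the way one also records the key identity $P_A z = P_A(Tz)$ valid for all $z$, equivalently $P_A z_n = P_A z_{n+1}$ is false in general but $P_A(\Fix T) = A\cap B$, so that the shadow sequence $a_n = P_A z_n$ has all its cluster points in $A\cap B$.

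With these ingredients the convergence is routine: since $T$ is firmly nonexpansive with $\Fix T \neq \varnothing$, the iteration $z_{n+1} = Tz_n$ is Fej\'er monotone with respect to $\Fix T$, the sequence $(z_n)$ is bounded, $z_n - z_{n+1} = z_n - Tz_n \to 0$ by a telescoping/summability argument using firm nonexpansiveness, and every cluster point of $(z_n)$ lies in $\Fix T$ by demiclosedness of $\Id - T$ (automatic in finite dimensions since $\Id-T$ is continuous). Fej\'er monotonicity plus a single cluster point in $\Fix T$ forces $z_n \to z_\infty \in \Fix T$. Finally $a_n = P_A z_n \to P_A z_\infty$ by continuity of $P_A$, and $P_A z_\infty \in A\cap B$ since $z_\infty \in \Fix T = (A\cap B) + N_{A-B}(0)$ and one verifies $P_A$ maps this set into $A\cap B$.

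The main obstacle I anticipate is not the convergence half—that is textbook—but the precise identification $\Fix T = (A\cap B) + N_{A-B}(0)$, which requires carefully translating the two-reflector fixed point condition into a statement about normal cones and then recognizing $N_A(x) \cap (-N_B(x))$ patched over $x \in A\cap B$ as exactly $N_{A-B}(0)$; this is where one must be careful about which direction of the set inclusion needs a separation/duality argument versus a direct computation, and it is the step the cited references \cite{LM} and \cite{josa} are really being invoked for.
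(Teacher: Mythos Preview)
The paper does not actually prove this statement: it is recorded as a \emph{Fact} and attributed to the combination of \cite{LM} and \cite{josa}, with no argument given in the text. So there is no ``paper's own proof'' to compare against; your outline is essentially the standard proof that those references (and, e.g., \cite[Chapter~25]{BC2011}) contain: rewrite $T=\tfrac12(\Id+R_BR_A)$, deduce firm nonexpansiveness, get asymptotic regularity and Fej\'er monotonicity with respect to $\Fix T$, identify $\Fix T$ via the normal-cone computation, and push the limit through $P_A$. That is exactly the route the cited papers take, so your plan is both correct and aligned with the intended source.

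Two small wrinkles worth tightening. First, the sentence ``one also records the key identity $P_Az=P_A(Tz)$ valid for all $z$, equivalently $P_Az_n=P_Az_{n+1}$ is false in general'' is self-contradictory; the identity is indeed false and you do not need it. What you actually use is that for $z\in\Fix T$ one has $P_Az=P_BR_Az$, whence $P_Az\in A\cap B$ directly. Second, your phrase ``$N_A(x)\cap(-N_B(x))$ patched over $x\in A\cap B$'' undersells the situation: for \emph{each} $x\in A\cap B$ one has $N_A(x)\cap(-N_B(x))=N_{\overline{A-B}}(0)$ exactly (both inclusions are short computations from the definition of the normal cone), so no patching or separation argument is required. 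With those two clarifications your sketch is complete.
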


Again, there are numerous refinements and adaptations of MAP and
DRA; however, 
it is here not our goal to survey the most general results
possible\footnote{See, e.g., \cite{BC2011} for various more general
variants of PPA, MAP, and DRA.} but rather
to focus on the speed of convergence. We will make this precise in the
next subsection.

\subsection*{Goal and contributions}

Most rate-of-convergence results for PPA, MAP, and DRA
take the following form: \emph{If some additional condition is satisfied,
then the convergence of the sequence is \emph{at least as good as} 
some form of ``fast'' convergence (linear, superlinear, quadratic
etc.).} This can be interpreted as a \emph{worst case analysis}. 
In the generality considered here\footnote{Some results are known
for MAP when the sets are linear subspaces; however, the slow
(sublinear) convergence can only be observed in 
\emph{infinite-dimensional Hilbert space}; see \cite{DHsurvey}
and references therein.}, we are not aware of results that approach 
this problem from the other side, i.e., that address the question:
\emph{Under which conditions is the convergence \emph{no better than} some form
of ``slow'' convergence?} This concerns the \emph{best case analysis}. 
\emph{Ideally, one would like an \emph{exact asymptotic rate of
convergence} in the sense of \eqref{e:sim} below.}

While we do not completely answer these questions, we do set out
to tackle them by providing a \emph{case study} 
when $X=\RR^2$ is the
Euclidean plane, the set $A = \RR\times \{0\}$ is the real axis, and
the set $B$ is the epigraph of a proper lower semicontinuous convex
function $f$. We will see that in this case MAP and DRA
have connections to the PPA applied to $f$. We focus in particular on the case not
covered by conditions guaranteeing linear convergence
of MAP or DRA \footnote{Indeed, the
most common sufficient condition for linear convergence in either
case is $\reli(A)\cap\reli(B)\neq\varnothing$; 
see \cite[Theorem~3.21]{BLPW2} for MAP
and \cite{Hung} or \cite[Theorem~8.5(i)]{BNP} for DRA.}.
We originally expected the
behaviour of MAP and DRA in cases of ``bad geometry'' to be
similar\footnote{This expectation was founded in the similar behaviour of
MAP and DRA for two subspaces; see \cite{BBNPW}.}.
It came to us as surprise that this appears not to be the case. In fact, the examples
we provide below suggest that DRA performs significantly
better than MAP. Concretely, suppose that $B$ is the epigraph of the function 
$f(x) = (1/p)|x|^p$, where $1<p<\pinf$. Since $A = \RR\times\{0\}$, we have
that $A\cap B = \{(0,0)\}$ and since $f'(0)=0$, the ``angle'' between $A$
and $B$ at the intersection is $0$. As expected
MAP converges sublinearly (even logarithmically) to $0$.
However, DRA converges faster in all cases: superlinearly (when $1<p<2$),
linearly (when $p=2$) or logarithmically (when $2<p<\pinf$).
This example is deduced by general results we obtain on
\emph{exact} rates of convergence for PPA, MAP and DRA. 

\subsection*{Organization}

The paper is organized as follows.
In Section~\ref{s:aux}, we provide various auxiliary results on the convergence
of real sequences. These will make the subsequent analysis of PPA, MAP, and
DRA more structured. 
Section~\ref{s:ppa} focuses on the PPA. After reviewing results on finite,
superlinear, and linear convergence, we exhibit a case where the asymptotic
rate is only logarithmic. 
We then turn to MAP in Section~\ref{s:map} and provide results on the
asymptotic convergence. We also draw the connection between MAP and PPA and
point out that a result of G\"uler is sharp.
In Section~\ref{s:dra}, we deal with DRA, draw again a connection to PPA and 
present asymptotic convergence. 
The notation we employ is fairly standard and follows, e.g., \cite{Rocky} and \cite{BC2011}. 

\section{Auxiliary results}
\label{s:aux}

In this section we collect various results that facilitate the subsequent
analysis of PPA, MAP and DRA. 
We begin with the following useful result which appears to be part of the
folklore\footnote{
Since we were able to locate only an online reference,
we include a proof in Appendix~\ref{a:Stolz}.}. 

\begin{fact}[generalized Stolz--Ces\`aro theorem]
\label{f:genStolz}
Let $(a_n)_\nnn$ and 
$(b_n)_\nnn$ be sequences in $\RR$ such that
$(b_n)_\nnn$ is unbounded and either strictly monotone
increasing or strictly monotone decreasing. 
Then 
\begin{equation}
\label{e:genStolz}
\varliminf_{n\to\infty} \frac{a_{n+1}-a_n}{b_{n+1}-b_n}
\leq 
\varliminf_{n\to\infty} \frac{a_n}{b_n}
\leq 
\varlimsup_{n\to\infty} \frac{a_n}{b_n}
\leq 
\varlimsup_{n\to\infty} \frac{a_{n+1}-a_n}{b_{n+1}-b_n},
\end{equation}
where the limits may lie in $[-\infty,+\infty]$. 
\end{fact}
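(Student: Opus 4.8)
The plan is to strip off the trivial part, collapse the rest to one inequality via two sign changes, and then prove that inequality by a telescoping estimate. The middle inequality in \eqref{e:genStolz}, $\varliminf_n a_n/b_n \le \varlimsup_n a_n/b_n$, holds for every real sequence, so only the two outer inequalities carry content. Replacing $(a_n)_\nnn$ by $(-a_n)_\nnn$ interchanges the leftmost and rightmost inequalities (use $\varliminf(-c_n)=-\varlimsup c_n$ and note that both ratios occurring in \eqref{e:genStolz} merely change sign), so it suffices to prove
\[
\varlimsup_{n\to\infty}\frac{a_n}{b_n}\ \le\ \varlimsup_{n\to\infty}\frac{a_{n+1}-a_n}{b_{n+1}-b_n}\ =:\ L .
\]
Replacing the pair $(a_n,b_n)_\nnn$ by $(-a_n,-b_n)_\nnn$ leaves both of these ratios unchanged and turns a strictly decreasing unbounded $(b_n)_\nnn$ into a strictly increasing one, so I may assume $(b_n)_\nnn$ strictly increasing; being unbounded it then satisfies $b_n\to\pinf$, whence $b_{n+1}-b_n>0$ for all $n$ and $b_n>0$ for all large $n$.

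For the main step: if $L=\pinf$ the inequality is trivial, so assume $L<\pinf$ and fix a real $\mu>L$. By the definition of $\varlimsup$ there is $N\in\NN$ such that $(a_{n+1}-a_n)/(b_{n+1}-b_n)<\mu$ for all $n\ge N$; multiplying through by $b_{n+1}-b_n>0$ gives $a_{n+1}-a_n<\mu(b_{n+1}-b_n)$, and summing over $n$ from $N$ to $m-1$ telescopes to $a_m-a_N<\mu(b_m-b_N)$, i.e.\ $a_m<(a_N-\mu b_N)+\mu b_m$ for all $m\ge N$. Dividing by $b_m>0$ (valid once $m$ is large) and letting $m\to\infty$, the term $(a_N-\mu b_N)/b_m$ tends to $0$ because $b_m\to\pinf$, so $\varlimsup_m a_m/b_m\le\mu$. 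Since $\mu>L$ was arbitrary, $\varlimsup_m a_m/b_m\le L$, as required; and when $L=\minf$ the bound holds for every real $\mu$, forcing $\varlimsup_m a_m/b_m=\minf$.

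I do not anticipate a real obstacle: the argument is elementary, essentially the classical Stolz--Ces\`aro telescoping trick with ``$\lim$'' weakened to ``$\varlimsup$''. The only points that demand a little care are the two reductions --- one must verify that, once the sign changes are accounted for, the single displayed inequality genuinely covers both nontrivial inequalities of the chain and both monotonicity cases for $(b_n)_\nnn$ --- and the behaviour at the endpoints $\pinf$ and $\minf$, which I would dispatch with the one-line remarks already indicated (namely, $L=\pinf$ makes the target inequality vacuous, and $\varlimsup_n a_n/b_n=\minf$ collapses the entire chain in \eqref{e:genStolz}).
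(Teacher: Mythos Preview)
Your proposal is correct and follows essentially the same route as the paper's proof: reduce to the rightmost inequality with $(b_n)$ increasing to $+\infty$, pick $\mu$ above the $\varlimsup$ of the difference quotients, telescope to get $a_m - a_N < \mu(b_m - b_N)$, divide by $b_m$, and let $\mu$ descend. The only cosmetic difference is that you make the two reductions explicit via the substitutions $(a_n)\mapsto(-a_n)$ and $(a_n,b_n)\mapsto(-a_n,-b_n)$, whereas the paper simply says ``the left inequality is similar'' and ``without loss of generality $b_n\to+\infty$''.
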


Setting $(b_n)_\nnn = (n)_\nnn$ in Fact~\ref{f:genStolz}, we
obtain the following:

\begin{corollary}
The following inequalities hold for an arbitrary sequence $(x_n)_\nnn$
in $\RR$:
\label{c:Cesaro}
\begin{equation}\label{e:cCesaro}
\varliminf_{n\to\infty} (x_{n+1}-x_n)
\leq 
\varliminf_{n\to\infty} \frac{x_n}{n}
\leq 
\varlimsup_{n\to\infty}\frac{x_n}{n}
\leq 
\varlimsup_{n\to\infty} (x_{n+1}-x_n).
\end{equation}
\end{corollary}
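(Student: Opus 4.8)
The plan is to obtain Corollary~\ref{c:Cesaro} as an immediate specialization of Fact~\ref{f:genStolz}. First I would set $a_n := x_n$ and $b_n := n$ for every $\nnn$, and verify the hypotheses of Fact~\ref{f:genStolz} for this choice of $(b_n)_\nnn$: the sequence $(n)_\nnn$ is clearly unbounded and strictly monotone increasing. Hence Fact~\ref{f:genStolz} applies with no further assumptions on $(x_n)_\nnn$, which is exactly the level of generality claimed in the corollary.

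Next I would simply substitute into \eqref{e:genStolz}. The consecutive-difference quotient becomes $\frac{a_{n+1}-a_n}{b_{n+1}-b_n} = \frac{x_{n+1}-x_n}{(n+1)-n} = x_{n+1}-x_n$, and the ratio becomes $\frac{a_n}{b_n} = \frac{x_n}{n}$. Plugging these two identities into the chain of four inequalities in \eqref{e:genStolz} yields precisely \eqref{e:cCesaro}, with the understanding that the $\varliminf$ and $\varlimsup$ may take values in $[-\infty,+\infty]$ exactly as in Fact~\ref{f:genStolz}.

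There is essentially no obstacle here: the only thing to be careful about is that Fact~\ref{f:genStolz} is stated for arbitrary real sequences $(a_n)_\nnn$ once $(b_n)_\nnn$ has the required monotonicity and unboundedness, so the corollary inherits the same ``for an arbitrary sequence $(x_n)_\nnn$'' hypothesis without needing $(x_n)_\nnn$ to converge, be monotone, or be bounded. Thus the proof is a one-line reduction, and the write-up would consist of that substitution together with the remark that $(n)_\nnn$ trivially satisfies the monotonicity requirement.
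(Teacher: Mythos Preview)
Your proposal is correct and matches the paper's own argument exactly: the corollary is obtained by setting $(a_n)_\nnn=(x_n)_\nnn$ and $(b_n)_\nnn=(n)_\nnn$ in Fact~\ref{f:genStolz}, noting that $(n)_\nnn$ is strictly increasing and unbounded. There is nothing to add.
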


For the remainder of this section, we assume that
\begin{empheq}[box=\mybluebox]{equation}
\text{
$g\colon\RPP\to\RPP$ is increasing and
$H$ is an antiderivative of $-1/g$.}
\end{empheq}

\begin{example}[$x^q$]
\label{ex:x^q}
Let $g(x)= x^q$ on $\RPP$, where
$1\leq q<\infty$. 
If $q>1$, then 
$-1/g(x)= -x^{-q}$ and we can choose 
$H(x)= x^{1-q}/(q-1)$ which has the inverse
$H^{-1}(x) = 1/((q-1)x)^{1/(q-1)}$.
If $q=1$, then we can choose 
$H(x) = -\ln(x)$ which has the inverse $H^{-1}(x) = \exp(-x)$. 
\end{example}

\begin{proposition}
\label{p:utility}
Let $(\beta_n)_\nnn$ and $(\delta_n)_\nnn$ be sequences in $\RPP$,
and suppose that 
\begin{equation}
(\forall\nnn)\quad 
\beta_{n+1} = \beta_n-\delta_ng(\beta_n). 
\end{equation}
Then the following hold:
\begin{enumerate}
\item
\label{p:utility1}
\label{e:p-util-b}
$\displaystyle
(\forall\nnn)\quad 
\delta_n \leq H(\beta_{n+1})-H(\beta_n) \leq
\delta_{n+1}\frac{\beta_{n}-\beta_{n+1}}{\beta_{n+1}-\beta_{n+2}}
= \delta_{n}\frac{g(\beta_n)}{g(\beta_{n+1})}$.
\item
\label{p:utility2}
$\displaystyle
\varliminf_{n\to\infty} \delta_n \leq 
\varliminf_{n\to\infty} \frac{H(\beta_n)}{n} \leq 
\varlimsup_{n\to\infty} \frac{H(\beta_n)}{n}
\leq 
\varlimsup_{n\to\infty} \delta_{n}\frac{g(\beta_n)}{g(\beta_{n+1})}
$.
\item
\label{p:utility3}
\label{e:p-util-a}
If $(\delta_n)_\nnn$ is convergent, say
$\delta_n\to\delta_\infty$,
and $\frac{g(\beta_n)}{g(\beta_{n+1})} \to 1$,
then 
$\frac{H(\beta_n)}{n} \to \delta_\infty$. 
\end{enumerate}
\end{proposition}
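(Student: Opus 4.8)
The plan is to prove the three parts in order, since each builds on the previous one. For part~\ref{p:utility1}, the key observation is that $H$ is an antiderivative of $-1/g$, so $H$ is \emph{decreasing} (because $g > 0$ on $\RPP$), and moreover, since $g$ is increasing, $-1/g$ is increasing, i.e.\ $H$ is \emph{convex}. From the recursion we have $\beta_n - \beta_{n+1} = \delta_n g(\beta_n) > 0$, so $(\beta_n)_\nnn$ is strictly decreasing and stays in $\RPP$. First I would write
\begin{equation*}
H(\beta_{n+1}) - H(\beta_n) = \int_{\beta_{n+1}}^{\beta_n} \frac{dt}{g(t)},
\end{equation*}
and bound the integrand: since $g$ is increasing and $\beta_{n+1} \le t \le \beta_n$, we have $g(\beta_{n+1}) \le g(t) \le g(\beta_n)$, hence
\begin{equation*}
\frac{\beta_n - \beta_{n+1}}{g(\beta_n)} \le H(\beta_{n+1}) - H(\beta_n) \le \frac{\beta_n - \beta_{n+1}}{g(\beta_{n+1})}.
\end{equation*}
Now substitute $\beta_n - \beta_{n+1} = \delta_n g(\beta_n)$ into both ends: the lower bound becomes exactly $\delta_n$, and the upper bound becomes $\delta_n g(\beta_n)/g(\beta_{n+1})$. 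That already gives the outermost inequalities and the final equality in~\ref{p:utility1}. For the middle equality $\delta_{n+1}(\beta_n - \beta_{n+1})/(\beta_{n+1} - \beta_{n+2}) = \delta_n g(\beta_n)/g(\beta_{n+1})$, I would just plug in $\beta_n - \beta_{n+1} = \delta_n g(\beta_n)$ and $\beta_{n+1} - \beta_{n+2} = \delta_{n+1} g(\beta_{n+1})$ and cancel.

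For part~\ref{p:utility2}, I would apply the generalized Stolz--Ces\`aro theorem (Fact~\ref{f:genStolz}), or rather its Corollary~\ref{c:Cesaro} with $x_n = H(\beta_n)$ and $b_n = n$. Corollary~\ref{c:Cesaro} gives
\begin{equation*}
\varliminf_{n\to\infty}\bigl(H(\beta_{n+1}) - H(\beta_n)\bigr) \le \varliminf_{n\to\infty}\frac{H(\beta_n)}{n} \le \varlimsup_{n\to\infty}\frac{H(\beta_n)}{n} \le \varlimsup_{n\to\infty}\bigl(H(\beta_{n+1}) - H(\beta_n)\bigr).
\end{equation*}
Then I would use part~\ref{p:utility1}: the lower bound $H(\beta_{n+1}) - H(\beta_n) \ge \delta_n$ pushes the outer $\varliminf$ down to $\varliminf \delta_n$, and the upper bound $H(\beta_{n+1}) - H(\beta_n) \le \delta_n g(\beta_n)/g(\beta_{n+1})$ pushes the outer $\varlimsup$ up to $\varlimsup \delta_n g(\beta_n)/g(\beta_{n+1})$. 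This yields~\ref{p:utility2} directly.

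Part~\ref{p:utility3} is then an immediate consequence: if $\delta_n \to \delta_\infty$ and $g(\beta_n)/g(\beta_{n+1}) \to 1$, then both $\varliminf \delta_n$ and $\varlimsup \delta_n g(\beta_n)/g(\beta_{n+1})$ equal $\delta_\infty$, so the sandwich in~\ref{p:utility2} collapses to $\lim_n H(\beta_n)/n = \delta_\infty$.

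I don't expect a serious obstacle here; the argument is essentially a careful bookkeeping exercise. The one point that needs attention is verifying that the sequences stay in $\RPP$ so that all the quantities ($g(\beta_n)$, $H(\beta_n)$, the integral bounds) are well defined --- but this is hypothesized, since $(\beta_n)_\nnn$ is assumed to be a sequence in $\RPP$ satisfying the recursion. A second minor subtlety is that $H$ is only determined up to an additive constant, but every statement in the proposition involves either a difference $H(\beta_{n+1}) - H(\beta_n)$ or the ratio $H(\beta_n)/n$ with $n \to \infty$, both of which are insensitive to the choice of constant (for the latter because an added constant contributes $c/n \to 0$); so there is nothing to fix there.
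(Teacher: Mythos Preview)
Your proposal is correct and follows essentially the same route as the paper: bound $H(\beta_{n+1})-H(\beta_n)=\int_{\beta_{n+1}}^{\beta_n}\frac{dt}{g(t)}$ using the monotonicity of $g$, substitute the recursion to obtain \ref{p:utility1}, feed the resulting increment bounds into Corollary~\ref{c:Cesaro} for \ref{p:utility2}, and squeeze for \ref{p:utility3}. Your aside about the convexity of $H$ is not actually used anywhere (the integral estimate relies only on $g$ being increasing), so you can drop it.
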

\begin{proof}
For every $\nnn$, we have
\begin{subequations}
\begin{align}
\delta_n = \frac{\beta_n-\beta_{n+1}}{g(\beta_n)}
&\leq \int_{\beta_{n+1}}^{\beta_n} \frac{dx}{g(x)} 
= H(\beta_{n+1})-H(\beta_n)\\
&\leq \frac{\beta_n-\beta_{n+1}}{g(\beta_{n+1})}
=\delta_{n+1}\frac{\beta_n-\beta_{n+1}}{\beta_{n+1}-\beta_{n+2}}
=\delta_n \frac{g(\beta_n)}{g(\beta_{n+1})}.
\end{align}
\end{subequations}
Hence \ref{p:utility1} holds.
Combining with \eqref{e:cCesaro},
we obtain \ref{p:utility2}.
Finally, \ref{p:utility3} follows from \ref{p:utility2}. 
\end{proof}

\begin{corollary}
\label{c:utility}
Let $(x_n)_\nnn$ and $(\delta_n)_\nnn$ be sequences in $\RPP$
such that 
\begin{equation}
(\forall\nnn)\quad x_n = x_{n+1}+\delta_n g(x_{n+1}). 
\end{equation}
Then the following hold:
\begin{enumerate}
\item
\label{c:utility1}
$\displaystyle
(\forall\nnn)\quad 
\delta_n \frac{g(x_{n+1})}{g(x_n)} 
\leq H(x_{n+1})-H(x_n) \leq
\delta_{n}$.
\item
\label{c:utility2}
$\displaystyle
\varliminf_\ntoinf \delta_n \frac{g(x_{n+1})}{g(x_n)} 
\leq 
\varliminf_\ntoinf \frac{H(x_n)}{n} \leq 
\varlimsup_\ntoinf \frac{H(x_n)}{n}
\leq 
\varlimsup_\ntoinf \delta_{n}
$.
\item
\label{c:utility3}
If $(\delta_n)_\nnn$ is convergent, say $\delta_n\to\delta_\infty$,
and  $\frac{g(x_{n+1})}{g(x_{n})} \to 1$, 
then $\frac{H(x_n)}{n} \to \delta_\infty$.
\end{enumerate}
\end{corollary}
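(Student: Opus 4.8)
The plan is to mirror the proof of Proposition~\ref{p:utility}, the only--but crucial--difference being that the recursion $x_n = x_{n+1}+\delta_n g(x_{n+1})$ samples $g$ at the \emph{new} iterate $x_{n+1}$ rather than at the old one; this will interchange the roles of the two bounds relative to Proposition~\ref{p:utility}. First I would record the monotonicity: since $\delta_n>0$ and $g(x_{n+1})>0$ (as $g$ takes values in $\RPP$), the defining relation gives $x_n-x_{n+1}=\delta_n g(x_{n+1})>0$, so $(x_n)_\nnn$ is strictly decreasing and $[x_{n+1},x_n]$ is a genuine nondegenerate interval.

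For part~\ref{c:utility1}, I use that $H$ is an antiderivative of $-1/g$ to write
\begin{equation*}
H(x_{n+1})-H(x_n)=\int_{x_n}^{x_{n+1}}\frac{-dx}{g(x)}=\int_{x_{n+1}}^{x_n}\frac{dx}{g(x)}.
\end{equation*}
Since $g$ is increasing, $g(x_{n+1})\leq g(x)\leq g(x_n)$ on $[x_{n+1},x_n]$, whence
\begin{equation*}
\frac{x_n-x_{n+1}}{g(x_n)}\leq \int_{x_{n+1}}^{x_n}\frac{dx}{g(x)}\leq \frac{x_n-x_{n+1}}{g(x_{n+1})}.
\end{equation*}
The right-hand side equals $\delta_n$ by definition of $\delta_n$, and, using $x_n-x_{n+1}=\delta_n g(x_{n+1})$, the left-hand side equals $\delta_n\,g(x_{n+1})/g(x_n)$. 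This is exactly~\ref{c:utility1}.

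Next, part~\ref{c:utility2} follows by applying Corollary~\ref{c:Cesaro} to the sequence $(H(x_n))_\nnn$: the outermost terms of \eqref{e:cCesaro} are $\varliminf_\ntoinf\bigl(H(x_{n+1})-H(x_n)\bigr)$ and $\varlimsup_\ntoinf\bigl(H(x_{n+1})-H(x_n)\bigr)$, which by~\ref{c:utility1} are bounded below by $\varliminf_\ntoinf \delta_n\,g(x_{n+1})/g(x_n)$ and above by $\varlimsup_\ntoinf \delta_n$, respectively. Finally, \ref{c:utility3} is a squeeze: under the stated hypotheses both outer quantities in~\ref{c:utility2} equal $\delta_\infty$, forcing $H(x_n)/n\to\delta_\infty$.

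I expect no serious obstacle. The only point requiring a little care is confirming the orientation of the integral--which rests on $(x_n)_\nnn$ being decreasing--so that the inequalities coming from the monotonicity of $g$ point in the correct direction; this is precisely what makes the bounds here the ``mirror image'' of those in Proposition~\ref{p:utility}, where $g$ is evaluated at the larger of the two consecutive terms.
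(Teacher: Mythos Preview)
Your argument is correct. The only difference from the paper is organizational: rather than redoing the integral bound, the paper sets $\varepsilon_n = \delta_n\, g(x_{n+1})/g(x_n)$, rewrites the recursion as $x_{n+1}=x_n-\varepsilon_n g(x_n)$, and then applies Proposition~\ref{p:utility} directly with $\beta_n=x_n$ and $\delta_n\leftarrow\varepsilon_n$; the substitution $\varepsilon_n\,g(x_n)/g(x_{n+1})=\delta_n$ then yields exactly the inequalities in~\ref{c:utility1}. Your direct approach is self-contained and makes the ``mirror image'' structure explicit, while the paper's reduction is shorter and reuses the earlier result; the underlying computation is the same.
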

\begin{proof}
Indeed, 
set $(\forall\nnn)$ $\ve_n = \delta_n \frac{g(x_{n+1})}{g(x_n)}$
and rewrite 
the update 
\begin{equation}
x_{n+1} = x_n -\delta_n \frac{g(x_{n+1})}{g(x_n)} g(x_n) = x_n -
\ve_n g(x_n).
\end{equation}
Now apply Proposition~\ref{p:utility}. 
\end{proof}

\begin{definition}[types of convergence]
Let $(\alpha_n)_\nnn$ be a sequence in $\RPP$ such that
$\alpha_n\to 0$,
and suppose there exist $1\leq q <\pinf$ such that
\begin{equation}
\frac{\alpha_{n+1}}{\alpha_n^q} \to c\in\RP. 
\end{equation}
Then the convergence of $(\alpha_n)_\nnn$ to $0$ is:
\begin{enumerate}
\item
\emph{with order $q$} if $q>1$ and $c>0$;
\item 
\emph{superlinear} if $q=1$ and $c=0$;
\item
\emph{linear} if $q=1$ and $0<c<1$;
\item
\emph{sublinear} if $q=1$ and $c=1$;
\item
\emph{logarithmic} if it is sublinear and
$|\alpha_{n+1}-\alpha_{n+2}|/|\alpha_{n} - \alpha_{n+1}| \to 1$.
\end{enumerate}
If $(\beta_n)_\nnn$ is also a sequence in $\RPP$,
it is convenient to define
\begin{equation}
\label{e:sim}
\alpha_n\sim\beta_n
\quad\Leftrightarrow\quad
\lim_\ntoinf \frac{\alpha_n}{\beta_n} \in\RPP.
\end{equation}
\end{definition}

The following example exhibits a case where we obtain
a simple exact asymptotic rate of convergence. 

\begin{example}
\label{ex:scheissruecken}
Let $(x_n)_\nnn$ and $(\delta_n)_\nnn$ be sequences in $\RPP$,
and let $1<q<\infty$.
Suppose that  
\begin{equation}
\delta_n\to\delta_\infty\in\RPP,
\;\;
\frac{x_n}{x_{n+1}}\to 1,
\;\;\text{and}\;\;
(\forall\nnn)\;
x_n= x_{n+1}+\delta_n x_{n+1}^q. 
\end{equation}
Then $x_n\to 0$ logarithmically, 
\begin{equation}\label{e:2.7-1}
\frac{x_n}{\Big(\frac{1}{n}\Big)^{1/(q-1)}}\to 
\frac{1}{((q-1)\delta_\infty)^{1/(q-1)}}
\quad\text{and}\quad
x_n \sim \Big(\frac{1}{n}\Big)^{1/(q-1)}.
\end{equation}
\end{example}
\begin{proof}
Suppose that $g(x) = x^q$ and  note that
$g(x_{n+1})/g(x_n)= (x_{n+1}/x_n)^q \to 1^q =1$.
This implies that $x_n\to 0$ logarithmically. 
Finally, \eqref{e:2.7-1} follows from Example~\ref{ex:x^q},
Corollary~\ref{c:utility}, and \eqref{e:sim}. 
\end{proof}

We conclude this section with some one-sided versions which are
useful for obtaining information about how fast or slow a sequence
must converge.

\begin{corollary}
\label{c:1-sd}
Let $(\beta_n)_\nnn$ and $(\rho_n)_\nnn$ be sequences in $\RPP$,
and suppose that 
\begin{equation}
(\forall\nnn)\;\beta_{n+1} \leq \beta_n-\rho_n g(\beta_n)
\;\;\text{and}\;\;
\underline{\rho} = \varliminf_\ntoinf \rho_n \in\RPP.
\end{equation}
Then
\begin{equation}
\big(\forall \ve \in\;]0,\underline{\rho}[\big)(\exi
m\in\NN)(\forall n\geq m)\;\;
\beta_n\leq H^{-1}\big( n(\underline{\rho}-\ve)\big).
\end{equation}
\end{corollary}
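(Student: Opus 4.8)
The plan is to reduce the one-sided recursion to the exact one already analyzed in Proposition~\ref{p:utility}. Since $g(\beta_n)>0$ for every $\nnn$, I would set $\delta_n:=(\beta_n-\beta_{n+1})/g(\beta_n)$; the hypothesis $\beta_{n+1}\le\beta_n-\rho_ng(\beta_n)$ says exactly that $\delta_n\ge\rho_n>0$, so $(\delta_n)_\nnn$ is a sequence in $\RPP$, the exact recursion $\beta_{n+1}=\beta_n-\delta_ng(\beta_n)$ holds for all $n$, and $(\beta_n)_\nnn$ is strictly decreasing. (Equivalently, one can bypass $(\delta_n)_\nnn$: on $[\beta_{n+1},\beta_n]$ one has $g\le g(\beta_n)$, hence $H(\beta_{n+1})-H(\beta_n)=\int_{\beta_{n+1}}^{\beta_n}dx/g(x)\ge(\beta_n-\beta_{n+1})/g(\beta_n)\ge\rho_n$.)

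Next I would apply the left-hand inequality of Proposition~\ref{p:utility}\ref{p:utility2} to $(\beta_n)_\nnn$ and $(\delta_n)_\nnn$, obtaining $\varliminf_\ntoinf\delta_n\le\varliminf_\ntoinf H(\beta_n)/n$; since $\delta_n\ge\rho_n$ for every $n$ and $\varliminf$ respects the pointwise order, $\varliminf_\ntoinf\delta_n\ge\varliminf_\ntoinf\rho_n=\underline\rho$, so $\varliminf_\ntoinf H(\beta_n)/n\ge\underline\rho$. (Via the direct estimate above, the same conclusion follows instead from Corollary~\ref{c:Cesaro} with $x_n=H(\beta_n)$.) Now fix $\ve\in\,]0,\underline\rho[$. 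Because $\underline\rho-\ve<\underline\rho\le\varliminf_\ntoinf H(\beta_n)/n$, the very definition of $\varliminf$ supplies an $m\in\NN$ with $H(\beta_n)/n>\underline\rho-\ve$, i.e. $H(\beta_n)>n(\underline\rho-\ve)$, for all $n\ge m$. Since $-1/g<0$ on $\RPP$, the antiderivative $H$ is strictly decreasing, so $H^{-1}$ is defined on the range of $H$ and is strictly decreasing; applying it gives $\beta_n<H^{-1}\big(n(\underline\rho-\ve)\big)$ for all $n\ge m$, which is the assertion.

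The only point needing a little care — and the main (modest) obstacle — is that $H^{-1}$ is being evaluated at $n(\underline\rho-\ve)$, so one must verify this number lies in the range of $H$. It lies below $\sup H=\lim_{x\to0^+}H(x)$ because $n(\underline\rho-\ve)<H(\beta_n)<\sup H$, and it exceeds $\inf H=\lim_{x\to\infty}H(x)$ once $n$ is large, since $n(\underline\rho-\ve)\to\pinf$ whereas $\inf H$ is either $\minf$ or finite; enlarging $m$ if necessary absorbs this. Everything else is routine bookkeeping, and no deeper difficulty arises.
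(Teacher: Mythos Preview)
Your proof is correct and follows essentially the same route as the paper: define $\delta_n=(\beta_n-\beta_{n+1})/g(\beta_n)\ge\rho_n$, invoke Proposition~\ref{p:utility}\ref{p:utility2} to obtain $\underline{\rho}\le\varliminf_\ntoinf H(\beta_n)/n$, and then invert. Your added discussion of why $n(\underline{\rho}-\ve)$ lies in the range of $H$ is a point the paper leaves implicit.
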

\begin{proof}
Observe that 
\begin{equation}
(\forall\nnn) \;\; \beta_{n+1} = \beta_n - \delta_n g(\beta_n),
\quad\text{where}\quad
\delta_n = \frac{\beta_n-\beta_{n+1}}{g(\beta_n)} \geq \rho_n.
\end{equation}
Hence, by Proposition~\ref{p:utility},
$\underline{\rho} \leq \varliminf_\ntoinf H(\beta_n)/n$.
Let $\ve \in]0,\underline{\rho}[$. 
Then there exists $m\in\NN$ such that 
$(\forall n\geq m)$ $\underline{\rho}-\ve \leq H(\beta_n)/n$
$\Leftrightarrow$
$H^{-1}( n(\underline{\rho}-\ve)) \geq \beta_n$. 
\end{proof}

\begin{example}
\label{ex:jonetal}
Let $(\beta_n)_\nnn$ and $(\rho_n)_\nnn$ be sequences in $\RPP$,
let $1\leq q < \infty$, 
and suppose that 
\begin{equation}
(\forall\nnn)\;\beta_{n+1} \leq \beta_n-\rho_n \beta_n^q
\;\;\text{and}\;\;
\underline{\rho} = \varliminf_\ntoinf \rho_n \in\RPP.
\end{equation}
Let $0<\ve<\underline{\rho}$.
Then there exists $m\in\NN$ such that the following hold:
\begin{enumerate}
\item
\label{ex:jonetal1}
If $q>1$, then
$\displaystyle (\forall n\geq m)\;\;
\beta_n \leq \frac{1}{\big((q-1)n(\underline{\rho}-\ve)\big)^{1/(q-1)}}
= O\big(1/n^{1/(q-1)}\big)$.
\item
If $q=1$, then 
$(\forall n\geq m)$ $\beta_n\leq\gamma^n$, where 
$\gamma = \exp(\ve-\underline{\rho})\in\zeroun$. 
\end{enumerate}
Consequently, the convergence of $(\beta_n)_\nnn$ to $0$ is at
least sublinear if $q>1$ and at least linear if $q=1$. 
\end{example}
\begin{proof}
Combine Example~\ref{ex:x^q} with Corollary~\ref{c:1-sd}.
\end{proof}

\begin{remark}
Example~\ref{ex:jonetal}\ref{ex:jonetal1}
can also be deduced from 
\cite[Lemma~4.1]{BLY}; see also \cite{AR}. 
\end{remark}

\begin{corollary}
\label{c:1-sd2}
Let $(\beta_n)_\nnn$ and $(\rho_n)_\nnn$ be sequences in $\RPP$,
and suppose that 
\begin{equation}
(\forall\nnn)\;\beta_n\ \geq \beta_{n+1} \geq \beta_n-\rho_n g(\beta_n)
\;\;\text{and}\;\;
\overline{\rho} = \varlimsup_\ntoinf \rho_n
\frac{g(\beta_n)}{g(\beta_{n+1})}\in\RP.
\end{equation}
Then
\begin{equation}
(\forall \ve \in\RPP)(\exi
m\in\NN)(\forall n\geq m)\;\;
\beta_n\geq H^{-1}\big( n(\overline{\rho}+\ve)\big).
\end{equation}
\end{corollary}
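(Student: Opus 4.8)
The plan is to imitate the proof of Corollary~\ref{c:1-sd}, interchanging the roles of ``lower'' and ``upper''. First I would convert the two-sided hypothesis into an increment estimate for $H\circ\beta$. Since $\beta_n\geq\beta_{n+1}$, both points lie in $\RPP$ and, exactly as in the proof of Proposition~\ref{p:utility}\ref{p:utility1}, monotonicity of $g$ gives
\[
H(\beta_{n+1})-H(\beta_n)=\int_{\beta_{n+1}}^{\beta_n}\frac{dx}{g(x)}\leq\frac{\beta_n-\beta_{n+1}}{g(\beta_{n+1})}.
\]
Feeding in the other half of the hypothesis, $\beta_n-\beta_{n+1}\leq\rho_n g(\beta_n)$, yields
\[
(\forall\nnn)\qquad H(\beta_{n+1})-H(\beta_n)\leq\rho_n\,\frac{g(\beta_n)}{g(\beta_{n+1})}.
\]
I would work with this inequality directly instead of quoting Proposition~\ref{p:utility} via the normalized step sizes $\delta_n=(\beta_n-\beta_{n+1})/g(\beta_n)$, because the monotonicity hypothesis only forces $\delta_n\geq0$ and $\delta_n$ may vanish, so it need not belong to $\RPP$; the displayed inequality is insensitive to this.

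Next I would apply Corollary~\ref{c:Cesaro} to the real sequence $x_n:=H(\beta_n)$; its rightmost inequality, combined with the estimate just obtained and the definition of $\overline\rho$, gives
\[
\varlimsup_\ntoinf\frac{H(\beta_n)}{n}\leq\varlimsup_\ntoinf\big(H(\beta_{n+1})-H(\beta_n)\big)\leq\varlimsup_\ntoinf\rho_n\,\frac{g(\beta_n)}{g(\beta_{n+1})}=\overline\rho.
\]

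Finally, I would fix $\ve\in\RPP$; by the previous display there is $m\in\NN$ with $H(\beta_n)\leq n(\overline\rho+\ve)$ for all $n\geq m$. Since $H'=-1/g<0$ on $\RPP$, the function $H$ is a strictly decreasing bijection onto $\ran H$, so $H^{-1}$ is strictly decreasing and applying it reverses the last inequality to give $\beta_n\geq H^{-1}(n(\overline\rho+\ve))$, which is the claim. The whole argument is routine; the one point that deserves a word---and which is already tacit in Corollary~\ref{c:1-sd}---is that $H^{-1}$ must actually be evaluated at $n(\overline\rho+\ve)$, so one should note that this number eventually lies in $\ran H$. This holds in all the cases of interest, e.g.\ $g(x)=x^q$ with $H(x)\to+\infty$ as $x\to0^+$ (Example~\ref{ex:x^q}), and if necessary one enlarges $m$ to absorb it.
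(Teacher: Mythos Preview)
Your argument is correct and follows essentially the same route as the paper: define the normalized step $\delta_n=(\beta_n-\beta_{n+1})/g(\beta_n)\leq\rho_n$, use the integral estimate from Proposition~\ref{p:utility}\ref{p:utility1} to bound $H(\beta_{n+1})-H(\beta_n)$ above by $\delta_n\,g(\beta_n)/g(\beta_{n+1})\leq\rho_n\,g(\beta_n)/g(\beta_{n+1})$, invoke the Ces\`aro inequality, and invert $H$. Your decision to bypass the formal hypotheses of Proposition~\ref{p:utility} (which ask for $\delta_n\in\RPP$) and work directly with the integral bound is a small but genuine improvement, since the two-sided hypothesis here permits $\beta_{n+1}=\beta_n$ and hence $\delta_n=0$; the paper glosses over this, as it does the range-of-$H$ issue you flag at the end.
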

\begin{proof}
Observe that 
\begin{equation}
(\forall\nnn)\;\;
\beta_{n+1} = \beta_n - \delta_n g(\beta_n),
\quad\text{where}\quad
\delta_n = \frac{\beta_n-\beta_{n+1}}{g(\beta_n)} \leq \rho_n.
\end{equation}
Hence, by Proposition~\ref{p:utility},
$\varlimsup_\ntoinf H(\beta_n)/n \leq 
\overline{\rho}$.
Let $\ve\in\RPP$. Then there exists $m\in\NN$ such that 
$(\forall n\geq m)$ $\overline{\rho}+\ve \geq H(\beta_n)/n$
$\Leftrightarrow$
$H^{-1}( n(\overline{\rho}+\ve)) \leq \beta_n$. 
\end{proof}

\begin{example}
\label{ex:anotherjon}
Let $(\beta_n)_\nnn$ and $(\rho_n)_\nnn$ be sequences in $\RPP$,
let $1\leq q < \infty$, 
and suppose that 
\begin{equation}
(\forall\nnn)\;\beta_n\geq\beta_{n+1} \geq \beta_n-\rho_n \beta_n^q
\;\;\text{and}\;\;
\overline{\rho} = \varlimsup_\ntoinf
\rho_n\frac{\beta_n^q}{\beta_{n+1}^q} \in\RP.
\end{equation}
Let $\ve\in\RPP$.
Then there exists $m\in\NN$ such that the following hold:
\begin{enumerate}
\item
If $q>1$, then
$\displaystyle (\forall n\geq m)\;\;
\beta_n \geq
\frac{1}{\big((q-1)n(\overline{\rho}+\ve)\big)^{1/(q-1)}}$.
\item
If $q=1$, then 
$(\forall n\geq m)$ $\beta_n\geq\gamma^n$, where 
$\gamma = \exp(-\overline{\rho}-\ve)\in\zeroun$. 
\end{enumerate}
Consequently, the convergence of $(\beta_n)_\nnn$ to $0$ is at
best sublinear if $q>1$ and at best linear if $q=1$. 
\end{example}
\begin{proof}
Combine Example~\ref{ex:x^q} with Corollary~\ref{c:1-sd2}.
\end{proof}

\section{Proximal point algorithm (PPA)}

\label{s:ppa}

This section focuses on the proximal point algorithm. We assume that
\begin{empheq}[box=\mybluebox]{equation}
f \colon \RR\to\RX
\quad
\text{is convex, lower semicontinuous, proper,}
\end{empheq}
with 
\begin{empheq}[box=\mybluebox]{equation}
\text{
$f(0)=0$ and $f(x)>0$ when $x\neq 0$.
}
\end{empheq}
Given $x_0\in\RR$, we will study the basic proximal point iteration
\begin{empheq}[box=\mybluebox]{equation}
(\forall\nnn)\quad 
x_{n+1} = P_f(x_n). 
\end{empheq}
Note that if $x>0$ and $y<0$,
then $f(y)+\tfrac{1}{2}|x-y|^2 > f(0) + \tfrac{1}{2}|x-0|^2 \geq
f(P_fx)+\tfrac{1}{2}|x-P_fx|^2$. Hence
the behaviour of $f|_{\RMM}$ is \emph{irrelevant} for the determination of
$P_f|_{\RPP}$ (and an analogous statement holds for the
determination of $P_f|_{\RMM}$)!
For this reason, we restrict our attention to the case when
\begin{empheq}[box=\mybluebox]{equation}
x_0\in\RPP
\end{empheq}
is the starting point of the proximal point algorithm. 
The general theory (Fact~\ref{f:ppa}) then yields
\begin{equation}
\label{e:0703a}
x_0 \geq x_1 \geq \cdots \geq x_n \downarrow 0.
\end{equation}
In this section, it will be convenient to additionally assume that
\begin{empheq}[box=\mybluebox]{equation}
\label{e:feven}
\text{$f$ is an even function;}
\end{empheq}
although, as mentioned, the behaviour of $f|_{\RMM}$ is actually
irrelevant because $x_0\in\RPP$. 
Combining the assumption that $0$ is the unique minimizer of $f$
with \cite[Theorem~24.1]{Rocky},
we learn that 
\begin{equation}\label{e:f'(0)}
0\in\partial f(0) = 
 [f'_-(0),f'_+(0)] \cap \RR =
[-f'_+(0),f'_+(0)] \cap \RR.
\end{equation}

We start our exploration by discussing convergence in finitely
many steps.

\begin{proposition}[finite convergence]
\label{p:proxfinconv}
We have $x_n\to 0$ in finitely many steps,
regardless of the starting point $x_0\in\RPP$,
if and only if 
\begin{equation}
0<f'_+(0),
\end{equation}
in which case $P_fx_n = 0$
$\Leftrightarrow$
$x_n \leq f'_+(0)$. 
\end{proposition}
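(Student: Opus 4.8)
The plan is to base everything on the resolvent description $P_f=(\Id+\partial f)^{-1}$. Concretely, the optimality condition for the proximal subproblem reads $x-P_f x\in\partial f(P_f x)$, so specializing at the value $0$ yields the key equivalence: for every $x\in\RPP$,
\[
P_f x = 0 \;\Longleftrightarrow\; x\in\partial f(0) \;\Longleftrightarrow\; x\leq f'_+(0).
\]
The second equivalence uses \eqref{e:f'(0)} together with the evenness assumption \eqref{e:feven}, which forces $\partial f(0)\cap\RR=[-f'_+(0),f'_+(0)]\cap\RR$ and in particular $f'_-(0)=-f'_+(0)\leq 0$, so the lower bound is automatic for $x>0$. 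This observation already gives the last assertion of the proposition, ``$P_f x_n=0\Leftrightarrow x_n\leq f'_+(0)$'', and it holds regardless of the sign of $f'_+(0)$.

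With this in hand, both implications are short. For sufficiency, suppose $0<f'_+(0)$; by \eqref{e:0703a} we have $x_n\downarrow 0$ for an arbitrary starting point $x_0\in\RPP$, so some iterate satisfies $x_N\leq f'_+(0)$, whence $x_{N+1}=P_f x_N=0$ and the algorithm terminates in finitely many steps. For necessity I would argue by contraposition: if $f'_+(0)=0$, the equivalence above gives $P_f x\neq 0$ for every $x\in\RPP$; combined with the fact noted in the paragraph preceding \eqref{e:0703a} that $P_f$ maps $\RPP$ into $\RP$, this yields $P_f(\RPP)\subseteq\RPP$. Hence, starting from any $x_0\in\RPP$, all iterates stay strictly positive and $x_n\to 0$ cannot occur in finitely many steps.

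I do not anticipate a real obstacle here; the only point requiring some care is the subdifferential bookkeeping in \eqref{e:f'(0)} and the degenerate cases, chiefly $f'_+(0)=+\infty$ (where $P_f\equiv 0$ on $\RPP$ and termination is immediate) and the possibility that $\dom f$ fails to be a neighbourhood of $0$ — neither of which affects the argument once the resolvent characterization is in place.
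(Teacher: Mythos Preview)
Your proposal is correct and follows essentially the same route as the paper: both proofs hinge on the equivalence $P_f x=0 \Leftrightarrow x\in\partial f(0) \Leftrightarrow x\leq f'_+(0)$ for $x>0$, derived from the resolvent characterization and \eqref{e:f'(0)}, and then combine it with \eqref{e:0703a} for sufficiency. The only cosmetic difference is that the paper argues necessity directly (a positive iterate mapped to $0$ forces $f'_+(0)>0$) whereas you use the contrapositive, but the underlying content is identical.
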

\begin{proof}
Let $x>0$. Then
$P_fx = 0$ 
$\Leftrightarrow$
$x \in 0 + \partial f(0)$
$\Leftrightarrow$
$x\leq f'_+(0)$
by \eqref{e:f'(0)}. 

Suppose first that $f'_+(0)>0$.
Then, by \eqref{e:f'(0)},  $0\in\inte\partial f(0)$ and,
using \eqref{e:0703a}, there exists $n\in\NN$
such that $x_n \leq f'_+(0)$.
It follows that $x_{n+1} = x_{n+2} = \cdots = 0$.
(Alternatively, this follows from a much more general result of
Rockafellar; see \cite[Theorem~3]{Rockprox} and also
Remark~\ref{r:rockfin} below.)

Now assume that there exists $n\in\NN$ such that
$P_fx_n=0$ and $x_n>0$. By the above, 
$x_n \leq f'_+(0)$ and thus $f'_+(0)>0$.
\end{proof}

An extreme case occurs when $f'_+(0)=\pinf$ in
Proposition~\ref{p:proxfinconv}:

\begin{example}[$\iota_{\{0\}}$ and the projector]
Suppose that $f=\iota_{\{0\}}$.
Then $P_f = P_{\{0\}}$ and $(\forall n\geq 1)$ $x_n=0$. 
\end{example}

\begin{example}[$|x|^1$ and the thresholder]
Suppose that $f=|\cdot|$ in which case
$\partial f(0)=[-1,1]$ and $f_+(0)=1$.
Proposition~\ref{p:proxfinconv} guarantees finite convergence
of the PPA. 
Indeed, either a direct argument or  \cite[Example~14.5]{BC2011}
yields
\begin{equation}
P_f \colon x\mapsto \begin{cases}
x - \frac{x}{|x|}, &\text{if $|x| > 1$;}\\
0, &\text{otherwise,}
\end{cases}
\end{equation}
Consequently, $x_n=0$ if and only if
$n \geq \lceil x_0\rceil$.
\end{example}

\begin{remark}
\label{r:rockfin}
In \cite[Theorem~3]{Rockprox}, 
Rockafellar 
provided a very general \emph{sufficient} condition for finite
convergence of the PPA (which works actually for finding zeros of 
a maximally monotone operator defined on a Hilbert space). In our present setting, his
condition is 
\begin{equation}
0 \in \inte \partial f(0). 
\end{equation}
By Proposition~\ref{p:proxfinconv}, this is also a condition that is 
\emph{necessary} for finite convergence. 
\end{remark}

Thus, we assume from now on that $f'_+(0)=0$, or equivalently 
(since $f$ is even and by \eqref{e:f'(0)}), that
\begin{empheq}[box=\mybluebox]{equation}
f'(0)=0.
\end{empheq}
in which case finite convergence fails and thus 
\begin{equation}
\label{e:150116car}
x_0 > x_1 > \cdots > x_n \downarrow 0.
\end{equation}

We now have the following sufficient condition for linear convergence.
The proof is a refinement of the ideas of Rockafellar in \cite{Rockprox}. 

\begin{proposition}[sufficient condition for linear convergence]
\label{p:sharper}
Suppose that
\begin{equation}
\label{e:psharper-d}
\lambda = \varliminf_{x\downarrow 0} \frac{f(x)}{x^2} 
\in \left]0,\pinf\right].
\end{equation}
Then the following hold:
\begin{enumerate}
\item
\label{p:sharper1}
If $\lambda < \pinf$, then 
there exists 
$\alpha_0 \in \big[\frac{1}{2\lambda},\frac{1}{\lambda}\big]$
such that
\begin{equation}
\label{e:psharper-c}
(\forall\ve>0)(\exi m\in\NN)(\forall n\geq m)\quad 
|x_{n+1}| \leq 
\frac{\alpha_0}{\sqrt{1+\alpha_0^2(1+2\lambda-2\ve)}} |x_n|.
\end{equation}
\item
\label{p:sharper2}
If $\lambda = \pinf$, then 
\begin{equation}
\label{e:psharper-c+}
(\forall\alpha>0)(\forall\varepsilon>0)(\exi m\in\NN)(\forall n\geq m)\quad 
|x_{n+1}| \leq 
\frac{\alpha}{\sqrt{1+\alpha^2(1+2\lambda-\ve)}} |x_n|.
\end{equation}
\end{enumerate}
\end{proposition}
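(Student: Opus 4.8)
The plan is to extract a recursive inequality for $x_n$ from the optimality condition defining the proximal step, and then feed it into the one-sided machinery of Section~\ref{s:aux} (specifically Corollary~\ref{c:1-sd} / Example~\ref{ex:jonetal}) or handle it directly. The starting observation is that, since $f'(0)=0$ and $x_{n+1}=P_fx_n>0$, the characterization of the proximal point gives $x_n-x_{n+1}\in\partial f(x_{n+1})$, hence for any subgradient bound we can relate $x_n-x_{n+1}$ to $f(x_{n+1})$ itself. Using convexity and $f(0)=0$ we have $f(x_{n+1})\le \scal{x_n-x_{n+1}}{x_{n+1}} = (x_n-x_{n+1})x_{n+1}$, which together with the hypothesis $\varliminf_{x\downarrow0} f(x)/x^2 = \lambda$ will, for $x_{n+1}$ small, yield $(\lambda-\ve)x_{n+1}^2 \le (x_n-x_{n+1})x_{n+1}$, i.e.\ $(1+\lambda-\ve)x_{n+1}\le x_n$. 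That already gives linear convergence, but to get the sharper constant with the free parameter $\alpha_0$ (resp.\ $\alpha$) one needs a second, complementary estimate.

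The second estimate comes from squaring. From Fact~\ref{f:ppa} applied with $z=0$ we get $x_{n+1}^2 + (x_n-x_{n+1})^2 \le x_n^2$, i.e.\ $2x_{n+1}^2 \le x_n^2 - (x_n-x_{n+1})^2 + x_{n+1}^2$; more usefully, combine the subgradient inequality $f(x_{n+1})\le(x_n-x_{n+1})x_{n+1}$ with the descent inequality $f(x_{n+1})+\tfrac12(x_n-x_{n+1})^2 \le f(0)+\tfrac12 x_n^2 = \tfrac12 x_n^2$ to obtain $(\lambda-\ve)x_{n+1}^2 \le x_n x_{n+1} - x_{n+1}^2$ and $\tfrac12(x_n-x_{n+1})^2 + (\lambda-\ve)x_{n+1}^2 \le \tfrac12 x_n^2$. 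Now introduce the ratio $t_n = x_{n+1}/x_n \in (0,1)$; the last inequality becomes $\tfrac12(1-t_n)^2 + (\lambda-\ve)t_n^2 \le \tfrac12$, i.e.\ $t_n^2(1+2\lambda-2\ve) \le 2t_n$, giving $t_n \le 2/(1+2\lambda-2\ve)$. To produce the stated form with $\alpha_0$, I would instead write the bound as $x_{n+1} \le \alpha_0(x_n - x_{n+1})$ for an appropriate $\alpha_0$ arising as (a limit point of) $x_{n+1}/(x_n-x_{n+1})$, note $\alpha_0\in[\tfrac1{2\lambda},\tfrac1\lambda]$ from the two one-sided bounds $(\lambda-\ve)x_{n+1}\le x_n-x_{n+1}$ and $x_{n+1}\ge$ (lower multiple of $x_n-x_{n+1}$), then combine $x_{n+1}=\alpha_0(x_n-x_{n+1})$ with $\tfrac12(x_n-x_{n+1})^2+(\lambda-\ve)x_{n+1}^2\le\tfrac12 x_n^2$. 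Substituting $x_n - x_{n+1} = x_{n+1}/\alpha_0$ yields $x_n^2 \ge x_{n+1}^2(1/\alpha_0^2)(1 + \alpha_0^2(1+2\lambda-2\ve))$, which is exactly \eqref{e:psharper-c} after taking square roots and absorbing the $\ve$-shift. The case $\lambda=\pinf$ is the same computation: for every fixed $\alpha>0$ and every $\ve$, smallness of $x_{n+1}$ forces $x_{n+1}/(x_n-x_{n+1})\le\alpha$ eventually (since this ratio $\to 0$), and the same substitution gives \eqref{e:psharper-c+}.

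The main obstacle is the bookkeeping around the parameter $\alpha_0$: one must argue that the sequence of ratios $r_n := x_{n+1}/(x_n-x_{n+1})$ has its liminf/limsup trapped in $[\tfrac1{2\lambda},\tfrac1\lambda]$ (using $f'(0)=0$ so $x_n-x_{n+1}\downarrow 0$ as well, hence the relevant asymptotics of $f$ near $0$ apply), select $\alpha_0$ as, say, $\varlimsup r_n$, and check that replacing $r_n$ by $\alpha_0$ only worsens the right-hand side of the quadratic inequality in the correct direction — i.e.\ that $\rho\mapsto \rho/\sqrt{1+\rho^2(1+2\lambda-2\ve)}$ is monotone on the relevant range so the bound with $\alpha_0$ dominates the bound with $r_n$ for all large $n$. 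The other subtlety is the uniform passage from the pointwise liminf hypothesis \eqref{e:psharper-d} to an eventual inequality $f(x_{n+1})\ge(\lambda-\ve)x_{n+1}^2$ valid for all $n\ge m$, which is immediate once one knows $x_{n+1}\downarrow 0$. Everything else is elementary algebra with the two inequalities coming from the proximal step.
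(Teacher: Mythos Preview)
Your overall strategy---combining the subgradient inequality $(x_n-x_{n+1})x_{n+1}\ge f(x_{n+1})\ge(\lambda-\ve)x_{n+1}^2$ with a quadratic relation between $x_n$, $x_{n+1}$, and $x_n-x_{n+1}$, and controlling the ratio $r_n=x_{n+1}/(x_n-x_{n+1})$---is exactly the skeleton of the paper's argument. There are, however, two concrete problems.

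First, and most importantly, the inequality you feed the substitution into is too weak to yield the stated constant. From the descent inequality $\tfrac12(x_n-x_{n+1})^2+(\lambda-\ve)x_{n+1}^2\le\tfrac12 x_n^2$, the substitution $x_n-x_{n+1}=x_{n+1}/\alpha_0$ gives only
\[
x_n^2 \;\ge\; \frac{x_{n+1}^2}{\alpha_0^2}\Big(1+\alpha_0^2(2\lambda-2\ve)\Big),
\]
not $1+\alpha_0^2(1+2\lambda-2\ve)$ as you claim; an $\alpha_0^2$ is missing in the bracket. The paper avoids this by using the \emph{exact} expansion $x_n^2 = x_{n+1}^2 + (x_n-x_{n+1})^2 + 2(x_n-x_{n+1})x_{n+1}$, which retains the term $x_{n+1}^2$, and then lower-bounds the cross term by $2f(x_{n+1})\ge(2\lambda-\ve)x_{n+1}^2$ via the subgradient inequality. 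This yields $x_n^2 \ge x_{n+1}^2 + x_{n+1}^2/\alpha^2 + (2\lambda-\ve)x_{n+1}^2$, which does produce the stated constant. Your descent inequality has already discarded precisely the $x_{n+1}^2$ you need.

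Second, your production of $\alpha_0$ differs from the paper's. The paper invokes Rockafellar \cite[Remark~4 and Proposition~7]{Rockprox} to obtain $\alpha_0\in\big[\tfrac{1}{2\lambda},\tfrac{1}{\lambda}\big]$ as the optimal Lipschitz modulus of $(\partial f)^{-1}$ at $0$; this is a property of $f$ alone and immediately gives $|x_{n+1}|\le\alpha|x_n-x_{n+1}|$ for every $\alpha>\alpha_0$ once $x_n-x_{n+1}$ is small (then a specific choice $\alpha=\alpha_0/\sqrt{1-\ve\alpha_0^2}$ converts the bound with $\alpha$ into one with $\alpha_0$ and a doubled $\ve$). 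Your $\alpha_0=\varlimsup r_n$ is sequence-dependent; while $\varlimsup r_n\le 1/\lambda$ does follow from your subgradient bound, the asserted lower bound $\varlimsup r_n\ge\tfrac{1}{2\lambda}$ has no supporting argument---the liminf hypothesis on $f(x)/x^2$ gives no upper control on subgradients, so $r_n$ could be arbitrarily small. This gap is not fatal: since $\rho\mapsto\rho/\sqrt{1+\rho^2(1+2\lambda-2\ve)}$ is increasing, you may simply replace a too-small $\varlimsup r_n$ by $\tfrac{1}{2\lambda}$ and the bound still holds. But you should say so rather than assert interval membership directly.
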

\begin{proof}
By \cite[Remark~4 and Proposition~7]{Rockprox}, there exists
$\alpha_0\in\big[\frac{1}{2\lambda},\frac{1}{\lambda}\big]$ 
such that $(\partial f)^{-1}$ is 
Lipschitz continuous at $0$ with every modulus $\alpha>\alpha_0$. 
Let $\alpha>\alpha_0$. Then there exists $\tau>0$ such that
\begin{equation}\label{e:psharper-a}
(\forall |x|<\tau)(\forall z\in(\partial f)^{-1}(x))
\quad |z|\leq\alpha|x|.
\end{equation}
Since $x_n\to 0$
by \cite[Theorem~2]{Rockprox} (or \eqref{e:150116car}), 
there exists $m\in\NN$ such that $(\forall n \geq m)$ $|x_n-x_{n+1}|\leq\tau$.
Let $n\geq m$. 
Noticing that $x_n\in (\Id +\partial f)(x_{n+1})$, we have
\begin{equation}
x_{n+1}\in(\partial f)^{-1}(x_n-x_{n+1}).
\end{equation}
It follows by \eqref{e:psharper-a} that
\begin{equation}\label{e:psharper-b}
|x_{n+1}|\leq\alpha|x_n-x_{n+1}|.
\end{equation}
Since $x_n-x_{n+1}\in\partial f(x_{n+1})$, we have
\begin{equation}\label{e:psharper-e}
\scal{x_n-x_{n+1}}{x_{n+1}} =\scal{x_n-x_{n+1}}{x_{n+1}-0} \geq f(x_{n+1})-f(0)=f(x_{n+1}).
\end{equation}
Now for every $\ve>0$, employing \eqref{e:psharper-d} and
increasing $m$ if necessary, we can and do assume that
\begin{equation}\label{e:psharper-f}
(\forall n \geq m)\quad \scal{x_n-x_{n+1}}{x_{n+1}}\geq \left(\lambda-\frac{\ve}{2}\right)|x_{n+1}|^2.
\end{equation}
Let $n\geq m$. 
Combining \eqref{e:psharper-b} and \eqref{e:psharper-f},
we obtain
\begin{subequations}
\begin{align}
|x_n|^2&=|x_{n+1}|^2+|x_n-x_{n+1}|^2
+2\scal{x_n-x_{n+1}}{x_{n+1}}\\
&\geq
|x_{n+1}|^2+\frac{1}{\alpha^{2}}|x_{n+1}|^2+(2\lambda-\ve)|x_{n+1}|^2\\
&=\Big(\frac{1+\alpha^2(1+2\lambda-\ve)}{\alpha^2}\Big)|x_{n+1}|^2.
\end{align}
\end{subequations}
This gives 
\begin{equation}
|x_{n+1}|\leq
\frac{\alpha}{\sqrt{1+\alpha^2(1+2\lambda-\ve)}}|x_{n}| 
\end{equation}
and hence \eqref{e:psharper-c+} holds. 
Now assume that $\lambda < \pinf$ so that $\alpha_0>0$. 
Since 
$\alpha\mapsto \frac{\alpha}{\sqrt{1+\alpha^2(1+2\lambda-\ve)}} =\frac{1}{\sqrt{\frac{1}{\alpha^2}+(1+2\lambda-\ve)}}$ 
is strictly increasing on $\RP$, we note 
that the choice 
$\alpha = \alpha_0/\sqrt{1-\ve\alpha_0^2} > \alpha_0$ yields 
\eqref{e:psharper-c}. 
\end{proof}

\begin{remark}
Assume that $f$ is differentiable on $U= \left]0,\delta\right[$, where $\delta\in\RPP$. 
Then $f'(x) >0$ on $U$.
Note that 
$\lim_{x\downarrow 0} \frac{f'(x)}{x} = f''_{+}(0)$.
Therefore, L'H\^opital's rule shows that 
if $f''_+(0)$ exists in $[0,\pinf]$, then 
\begin{equation}
\lambda = \tfrac{1}{2}f''_+(0)
\end{equation}
in \eqref{e:psharper-d}.
A sufficient condition for $\lambda$ to exist is to assume that 
the function $f(x)/x^2$ is monotone on $U$ which in turn
happens when $2f(x)-xf'(x)$ is either nonnegative or nonpositive on $U$ by
using the quotient rule. 
\end{remark}

Although we won't need it in the remainder of this paper,
we point out that the proof of Proposition~\ref{p:sharper} still
works in a more general setting leading to the following result:

\begin{corollary}
\label{c:proxlin}
Let $H$ be a real Hilbert space,
and let $f\colon H\to\RX$ be convex,
lower semicontinuous and proper such that
$0$ is the unique minimizer of $f$.
Assume also that 
\begin{equation}
\label{e:proxlin1}
\lambda = \varliminf_{0\neq x \to 0} \frac{f(x)}{\|x\|^2} 
\in \left]0,\pinf\right].
\end{equation}
Then there exists 
$\alpha_0\in \big[\frac{1}{2\lambda},\frac{1}{\lambda}\big]$
such that 
\begin{equation}
\label{e:proxlin2}
(\forall\alpha>\alpha_0)(\forall\ve>0)(\exi m\in\NN)(\forall n\geq m)\quad 
\|x_{n+1}\| \leq
\frac{\alpha}{\sqrt{1+\alpha^2(1+2\lambda-\varepsilon) }} \|x_n\|.
\end{equation}
If $\lambda<\pinf$, then eventually
\begin{equation}
\label{e:150114a}
\|x_{n+1}\|\leq \frac{\alpha_0}{\sqrt{1+\alpha_0}^2}\|x_n\|,
\end{equation}
a result which can
also be deduced from \cite[Theorem~2]{Rockprox}. 
\end{corollary}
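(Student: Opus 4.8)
The plan is to mimic the proof of Proposition~\ref{p:sharper} line by line, checking at each step that nothing used the one-dimensionality of the space and replacing absolute values by Hilbert-space norms and the quotient $f(x)/x^2$ by $f(x)/\|x\|^2$. First I would record the two structural facts I need from Rockafellar. By \cite[Remark~4 and Proposition~7]{Rockprox}, the hypothesis \eqref{e:proxlin1} (which is exactly a lower bound $\lambda$ on the ``second-order'' growth of $f$ at its minimizer $0$) implies that $(\partial f)^{-1}$ is Lipschitz continuous at $0$ with every modulus $\alpha>\alpha_0$ for some $\alpha_0\in[\tfrac{1}{2\lambda},\tfrac{1}{\lambda}]$; and by \cite[Theorem~2]{Rockprox}, the PPA iterates $x_n=P_f(x_{n-1})$ converge to the unique minimizer $0$. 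Fix $\alpha>\alpha_0$; then there is $\tau>0$ with $\|z\|\le\alpha\|x\|$ whenever $\|x\|<\tau$ and $z\in(\partial f)^{-1}(x)$, and since $x_n-x_{n+1}\to 0$, for all large $n$ we have $\|x_n-x_{n+1}\|<\tau$.

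Next I would reproduce the core estimate. Since $x_n\in(\Id+\partial f)(x_{n+1})$, we get $x_{n+1}\in(\partial f)^{-1}(x_n-x_{n+1})$, hence $\|x_{n+1}\|\le\alpha\|x_n-x_{n+1}\|$ for $n$ large; this is the only place Lipschitz continuity of $(\partial f)^{-1}$ enters, and it transfers verbatim to Hilbert space. The subgradient inequality gives $\langle x_n-x_{n+1},x_{n+1}\rangle\ge f(x_{n+1})-f(0)=f(x_{n+1})$, and then \eqref{e:proxlin1} lets us say that, after increasing $m$, $\langle x_n-x_{n+1},x_{n+1}\rangle\ge(\lambda-\tfrac{\ve}{2})\|x_{n+1}\|^2$ for all $n\ge m$. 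Expanding $\|x_n\|^2=\|x_{n+1}\|^2+\|x_n-x_{n+1}\|^2+2\langle x_n-x_{n+1},x_{n+1}\rangle$ and inserting the two bounds $\|x_n-x_{n+1}\|^2\ge\alpha^{-2}\|x_{n+1}\|^2$ and $2\langle x_n-x_{n+1},x_{n+1}\rangle\ge(2\lambda-\ve)\|x_{n+1}\|^2$ yields
\begin{equation*}
\|x_n\|^2\ge\frac{1+\alpha^2(1+2\lambda-\ve)}{\alpha^2}\,\|x_{n+1}\|^2,
\end{equation*}
which is precisely \eqref{e:proxlin2}. This handles both $\lambda<\pinf$ and $\lambda=\pinf$ at once, since the argument never divided by $\alpha_0$.

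For the sharper bound \eqref{e:150114a} when $\lambda<\pinf$, I would argue exactly as at the end of the proof of Proposition~\ref{p:sharper}: the map $\alpha\mapsto\alpha/\sqrt{1+\alpha^2(1+2\lambda-\ve)}=1/\sqrt{\alpha^{-2}+1+2\lambda-\ve}$ is strictly increasing in $\alpha$, so taking $\alpha\downarrow\alpha_0$ (and $\ve\downarrow 0$) gives the limiting contraction factor $\alpha_0/\sqrt{1+\alpha_0^2(1+2\lambda)}$; using $\alpha_0\le 1/\lambda$, i.e. $2\lambda\alpha_0^2\le 2\alpha_0$, one bounds this by $\alpha_0/\sqrt{1+\alpha_0^2+2\alpha_0}=\alpha_0/\sqrt{(1+\alpha_0)^2}$, which is \eqref{e:150114a}; alternatively one simply cites \cite[Theorem~2]{Rockprox} directly for this eventual rate. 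I do not anticipate a genuine obstacle here: the only subtlety is bookkeeping the two applications of ``increase $m$ if necessary'' (once to make $\|x_n-x_{n+1}\|<\tau$, once to make the second-order lower bound hold with $\lambda-\tfrac{\ve}{2}$), and noting that $x_n\to 0$ and $x_n-x_{n+1}\to 0$ both follow from Rockafellar's convergence theorem rather than from the monotone-decreasing scalar picture \eqref{e:150116car} available in the one-dimensional case.
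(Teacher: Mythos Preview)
Your derivation of \eqref{e:proxlin2} is correct and is exactly what the paper does: it simply says that ``the proof of Proposition~\ref{p:sharper} still works in a more general setting'', and you have carried that out verbatim with norms in place of absolute values.

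There is, however, a slip in your derivation of \eqref{e:150114a}. From $\alpha_0\le 1/\lambda$ you correctly obtain $2\lambda\alpha_0^2\le 2\alpha_0$, but this makes the denominator $1+\alpha_0^2(1+2\lambda)$ \emph{smaller} than $1+\alpha_0^2+2\alpha_0=(1+\alpha_0)^2$, hence the contraction factor $\alpha_0/\sqrt{1+\alpha_0^2(1+2\lambda)}$ is \emph{at least} $\alpha_0/(1+\alpha_0)$, not at most. So the inequality goes the wrong way and you cannot pass from the sharper rate to $\alpha_0/(1+\alpha_0)$ this way. In fact, the displayed bound \eqref{e:150114a} in the paper is a misprint: the intended rate is $\alpha_0/\sqrt{1+\alpha_0^2}$, as one sees from the subsequent example ($f(x)=x^2$, $\alpha_0=\tfrac12$, rate $1/\sqrt5$). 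With that correction your limiting argument does work, since $1+\alpha_0^2(1+2\lambda)>1+\alpha_0^2$ whenever $\lambda>0$, so for $\alpha$ slightly above $\alpha_0$ and $\ve$ small one has $\alpha/\sqrt{1+\alpha^2(1+2\lambda-\ve)}<\alpha_0/\sqrt{1+\alpha_0^2}$, and the ``eventually'' threshold for this fixed $(\alpha,\ve)$ suffices. Your fallback of citing \cite[Theorem~2]{Rockprox} directly is of course also fine, and is what the paper itself does.
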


We now discuss powers of the absolute value function. 

\begin{example}[$|x|^2$ and linear convergence]
Suppose that $f(x)=x^2$. Then $x+f'(x) = 3x$ and hence $P_f =
\frac{1}{3}\Id$. We see that the actual linear rate of
convergence of the PPA is 
\begin{equation} \frac{1}{3}.
\end{equation}
Now consider Proposition~\ref{p:sharper} and
Corollary~\ref{c:proxlin}. 
Then clearly $\lambda=1$ in \eqref{e:psharper-d} and hence 
$\alpha_0\in \big[\frac{1}{2},1\big]$.
In fact, since $\partial f = 2\Id$ and so $(\partial f)^{-1} =
\frac{1}{2}\Id$, we know that the tightest choice for $\alpha_0$ is
$\alpha_0=\frac{1}{2}$.
The linear rate obtained by \eqref{e:150114a} is
${(1/2)}/{\sqrt{1+(1/2)^2}}$, i.e., 
\begin{equation}
\frac{1}{\sqrt{5}}.
\end{equation}
Let us compare to the linear rate provided by
Proposition~\ref{p:sharper},
where, for every $\ve>0$, we obtain 
${(1/2)}/{\sqrt{1+(1/2)^2(1+2-2\ve)}} = 1/\sqrt{7-2\ve}$.
From the proof of Proposition~\ref{p:sharper}, we see that we can
here actually set
$\ve=0$; thus, the rate provided is 
\begin{equation}
\frac{1}{\sqrt{7}}.
\end{equation}
In summary, $1/\sqrt{7}$, the rate from
Proposition~\ref{p:sharper}, is better
than $1/\sqrt{5}$, which comes from \eqref{e:150114a}; 
however, even the former does not capture the true rate $1/3$. 
\end{example}

\begin{example}[$|x|^q$, where $1<q<2$, and superlinear
convergence]\ \\
\label{ex:xq}
Suppose that $f(x) = |x|^q$, where $1<q<2$.
Note that $\lambda=\pinf$ and thus $\alpha_0=0$ in
\eqref{e:psharper-c}. 
In passing, we point out that we cannot use 
$\alpha_0$ itself in \eqref{e:psharper-c} because
it would imply finite convergence which does not occur by
\eqref{e:150116car}.
Set $\phi(x) =x +qx^{q-1}$ and note that 
$(\forall\nnn)$ $x_n =\phi(x_{n+1})$. 
Now set also $\psi(x) =qx^{q-1}$,
and assume that a sequence $(\rho_n)_\nnn$ satisfies
$(\forall\nnn)$ $\rho_n =\psi(\rho_{n+1})$. 
The sequence $(\rho_n)_\nnn$ can be thought of as an
approximation of $(x_n)_\nnn$. It has the advantage that
the implicit recursion is invertible and solvable; indeed
one may verify by induction that 
\begin{equation}
(\forall\nnn)\quad 
\rho_n = \rho_0^{1/(q-1)^n} (1/q)^{(1/(q-1)^n -1)/(2-q)};
\end{equation}
Assume furthermore that $\rho_0 = x_0$ is sufficiently close to $0$. 
Since $\phi$ and $\psi$ are increasing and $\phi > \psi > 0$ on $\RPP$,
we deduce that $(\forall n\geq 1)$ $x_n < \rho_n$. 
Therefore,
\begin{equation}
(\forall\nnn)\;\;
\frac{x_n}{\rho_n} =\frac{\phi(x_{n+1})}{\psi(x_{n+1})} \left( \frac{x_{n+1}}{\rho_{n+1}} \right)^{q-1} 
> \left( \frac{x_{n+1}}{\rho_{n+1}} \right)^{q-1},
\end{equation}
which implies that
\begin{equation}
0 < \frac{x_n}{\rho_n} < \left( \frac{x_1}{\rho_1}
\right)^{1/(q-1)^{n-1}} \to 0 
\end{equation}
because $1/(q-1) >1$.
Let $\nnn$. It follows from $x_n = x_{n+1} +qx_{n+1}^{q-1}
>x_{n+1}$ and $1 <q <2$ that $x_{n+1}x_n^{q-1} <x_n x_{n+1}^{q-1}$,
and so
\begin{equation}
\frac{x_n}{x_{n-1}} = \frac{x_{n+1} +qx_{n+1}^{q-1}}{x_n +qx_n^{q-1}} <\frac{x_{n+1}^{q-1}}{x_n^{q-1}}.
\end{equation} 
This gives $\frac{x_n^{1/(q-1)}}{x_{n-1}^{1/(q-1)}} <\frac{x_{n+1}}{x_n}$;
hence, 
\begin{equation}
\frac{x_n}{x_{n-1}^{1/(q-1)}} <\frac{x_{n+1}}{x_n^{1/(q-1)}}.
\end{equation}
On the other hand, $x_n = x_{n+1} +qx_{n+1}^{q-1} >qx_{n+1}^{q-1}$,
which yields $(\frac{x_n}{q})^{1/(q-1)} >x_{n+1}$ and hence 
$\frac{x_{n+1}}{x_n^{1/(q-1)}} <q^{1/(1-q)}$. 
The sequence
$(x_{n+1}/(x_n^{1/(q-1)}))_\nnn$ is thus increasing and
bounded above, and so it converges to some $\mu > 0$. We obtain
that $x_n \to 0$ superlinearly with order $1/(q-1)$.
\end{example}

\begin{example}[$|x|^q$, where $2<q$, and logarithmic
convergence] \ \\
Suppose that $f(x) = |x|^q$, where $2<q<\pinf$.
Because $(\forall\nnn)$ 
$x_n = x_{n+1} + qx_{n+1}^{q-1}$, we have 
$x_n/x_{n+1} = 1+ q x_{n+1}^{q-2} \to 1$.
It thus follows from Example~\ref{ex:scheissruecken} that
$x_n\to 0$ logarithmically and 
\begin{equation}
\frac{x_n}
{\big(\frac{1}{n}\big)^{1/(q-2)}}\to
\frac{1}{((q-2)q)^{1/(q-2)}}.
\end{equation}
\end{example}

Let us summarize what we found out in the previous three examples
about the behaviour of the PPA applied to $|x|^q$:

\begin{table*}[H] \centering
\begin{tabular}{@{}r|l@{}} \toprule
$q$ & PPA convergence of $x_n\to 0$ for $f(x)=|x|^q$\\ \midrule
$1<q<2$ & superlinear with order $1/(q-1)$ \\
$q=2$ & linear with rate $1/3$ \\
$2<q<\pinf$& logarithmic\\
\bottomrule
 \end{tabular}
\end{table*}

\section{Method of Alternating Projections (MAP)}

\label{s:map}
We now turn to the method of alternating projections. 
As in Section~\ref{s:ppa}, we assume without loss of generality
that 
\begin{subequations}\label{e:fAB}
\begin{empheq}[box=\mybluebox]{equation}
f \colon \RR\to\RX
\quad\text{is convex, lower semicontinuous, and proper,}
\end{empheq}
with 
\begin{empheq}[box=\mybluebox]{equation}
\text{$f$ even, $f(0)=0$, $f>0$ otherwise, and $f'(0)=0$.}
\end{empheq}
Furthermore, we set 
\begin{empheq}[box=\mybluebox]{equation}
A = \RR \times \{0\}
\quad\text{and}\quad
B= \epi f.
\end{empheq}
\end{subequations}

The projection onto $A$ is very simple: 
\begin{equation}
P_A\colon \RR^2\to\RR^2\colon (x,r)\mapsto (x,0).
\end{equation}
We now turn to $P_B$.

\begin{fact}
\label{f:epi}
{\rm (See \cite[Proposition~9.18 and
Proposition~28.28]{BC2011})}
Let $(x,r)\in (\dom f \times \RR)\smallsetminus B$.
Then
$P_{B}(x,r) = (y,f(y))$,
where $y$ satisfies 
$x\in y + (f(y)-r)\partial f(y)$.
Moreover,
$r<f(y)$ and 
$(\forall z\in\dom f)$
$(z-y)(x-y)\leq (f(z)-f(y))(f(y)-r)$.
\end{fact}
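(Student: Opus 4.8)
The plan is to characterize $(y,s):=P_B(x,r)$ by the standard obtuse-angle inequality for projections onto a closed convex set, and then read off both the claimed inequality and the subdifferential inclusion from it. Since $B=\epi f$ is nonempty, closed, and convex, $P_B(x,r)$ is a well-defined single point $(y,s)\in B$, and it is characterized by $(x-y)(z-y)+(r-s)(t-s)\le 0$ for every $(z,t)\in B$, i.e.\ for every $z\in\dom f$ and every $t\ge f(z)$.

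First I would show the projection lands on the graph, $s=f(y)$. We have $s\ge f(y)$ because $(y,s)\in B$; suppose $s>f(y)$. Taking $z=y$, $t=f(y)$ gives $(r-s)(f(y)-s)\le 0$, hence $r\ge s$; taking $z=y$ and letting $t\to+\infty$ forces $r\le s$, so $r=s$. Then with $z=x$, $t=f(x)$ (legitimate precisely because $x\in\dom f$, so $f(x)<\infty$) the inequality reduces to $(x-y)^2\le 0$, so $x=y$ and $r=s>f(y)=f(x)$, contradicting $(x,r)\notin B$. Hence $s=f(y)$.

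Next, with $s=f(y)$, letting $t\to+\infty$ in the inequality forces $r\le f(y)$; and $r=f(y)$ would give $(x-y)(z-y)\le 0$ for all $z\in\dom f$, whence $z=x$ yields $x=y$ and again $(x,r)\in B$, impossible. So $r<f(y)$ and $\mu:=f(y)-r>0$. Now specialize to the tight constraint $t=f(z)$: for every $z\in\dom f$ one gets $(x-y)(z-y)\le (f(y)-r)(f(z)-f(y))$, which is exactly the asserted ``Moreover'' inequality (and it holds trivially for $z\notin\dom f$). Dividing by $\mu$ rewrites it as $f(z)\ge f(y)+\tfrac{x-y}{\mu}(z-y)$ for all $z$, i.e.\ $\tfrac{x-y}{\mu}\in\partial f(y)$, equivalently $x\in y+(f(y)-r)\partial f(y)$.

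There is no genuinely hard step here; the only thing to watch is the bookkeeping in the two case distinctions (ruling out $s>f(y)$ and then $r=f(y)$), and noticing that the hypothesis $x\in\dom f$ is exactly what makes $z=x$ an admissible test point, which is why that assumption appears. One could instead run everything through the normal cone $N_{\epi f}(y,f(y))$, but the elementary variational-inequality route above is the cleanest and makes the ``Moreover'' part fall out for free.
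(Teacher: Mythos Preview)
Your proof is correct. Note, however, that the paper does not supply its own proof of this statement: it is recorded as a \emph{Fact} and simply cited from \cite[Proposition~9.18 and Proposition~28.28]{BC2011}. So there is nothing to compare against in the paper itself.

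Your argument is a clean, self-contained variational derivation directly from the obtuse-angle characterization of the projection onto a closed convex set. The two case analyses (ruling out $s>f(y)$, then ruling out $r=f(y)$) are handled correctly, and you are right that the hypothesis $x\in\dom f$ is precisely what permits the test point $z=x$, $t=f(x)$. The referenced propositions in \cite{BC2011} proceed instead through the structure of the normal cone to an epigraph and the prox of a ``perspective-type'' function, which is more general machinery; your elementary route is entirely adequate here and has the virtue that the ``Moreover'' inequality drops out as an intermediate step rather than requiring a separate argument.
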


\begin{corollary}
\label{c:epi}
Suppose that $f$ is differentiable at $0$, 
let $(x,r)\in (\dom f \times \RR) \smallsetminus B$, 
and set $(y,f(y)) = P_B(x,r)$.
Then $y =0$ if $x =0$, and 
$y$ lies strictly between $x$ and $0$ otherwise.
Furthermore, 
$x^2+r^2 \geq y^2 + f(y)^2 + (x-y)^2 + (r-f(y))^2$.
\end{corollary}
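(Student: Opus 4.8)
The plan is to read off both assertions from Fact~\ref{f:epi} together with the firm nonexpansiveness of the projector $P_B$ onto the closed convex set $B=\epi f$ (Fact~\ref{f:prox}, applied to $\iota_B$). Throughout I would use the three items supplied by Fact~\ref{f:epi}: the inclusion $x\in y+(f(y)-r)\partial f(y)$, the strict inequality $f(y)-r>0$, and the variational inequality $(z-y)(x-y)\le(f(z)-f(y))(f(y)-r)$ valid for every $z\in\dom f$. I would also use the standing hypotheses, in particular that $f$ is even, that $0$ is the unique minimizer of $f$, and that $f'(0)=0$ (which holds since $f$ is differentiable at $0$ and $0\in\partial f(0)$); the latter forces $f$ to be finite on a neighbourhood of $0$ and, together with uniqueness of the minimizer, forces $f$ to be strictly increasing on $\RPP$, since a flat piece would produce a second minimizer.

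For the location of $y$, I would first handle $x=0$: substituting $z=0$ into the variational inequality gives $y^2=(0-y)(0-y)\le(f(0)-f(y))(f(y)-r)=-f(y)(f(y)-r)\le0$, hence $y=0$. Next assume $x\ne0$; since $f$ is even, $B$ is symmetric across the vertical axis and $P_B(-x,r)=(-y,f(y))$, so I may assume $x>0$. To exclude $y=0$, take a small $z=t>0$ (legitimate as $f$ is finite near $0$): the variational inequality reads $tx\le -rf(t)$, so $x\le -r\,f(t)/t\to -r\,f'_+(0)=0$ as $t\downarrow0$, a contradiction; hence $y\ne0$ and $f(y)>0$. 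Taking $z=0$ then gives $(-y)(x-y)\le -f(y)(f(y)-r)<0$, and if $y<0$ both factors on the left would be positive, which is impossible, so $y>0$. Finally, taking $z=x$ gives $(x-y)^2\le(f(x)-f(y))(f(y)-r)$; if $y>x$ the right-hand side is negative while the left is positive, a contradiction, and if $y=x$ then $0\in(f(y)-r)\partial f(y)$ would make $x$ a minimizer of $f$, again a contradiction. Hence $0<y<x$ when $x>0$, and the case $x<0$ follows by evenness, so $y$ lies strictly between $x$ and $0$.

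The norm inequality is then immediate: since $B$ is nonempty closed convex, $P_B$ is firmly nonexpansive, and since $(0,0)\in B$ we have $P_B(0,0)=(0,0)$; applying firm nonexpansiveness to the pair $(x,r)$ and $(0,0)$ gives $\|P_B(x,r)\|^2+\|(x,r)-P_B(x,r)\|^2\le\|(x,r)\|^2$, which is exactly $y^2+f(y)^2+(x-y)^2+(r-f(y))^2\le x^2+r^2$. I expect the only delicate point to be the ordering of $y$: excluding $y=0$ when $x\ne0$ is where $f'_+(0)=0$ enters in an essential way, and excluding $y>x$ relies on the strict monotonicity of $f$ on $\RPP$ that comes from uniqueness of the minimizer; the remaining steps are routine substitutions into the inequalities of Fact~\ref{f:epi} and the firm nonexpansiveness estimate.
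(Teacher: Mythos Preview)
Your proof is correct and uses the same ingredients as the paper's: Fact~\ref{f:epi} for the location of $y$, and firm nonexpansiveness of $P_B$ together with $P_B(0,0)=(0,0)$ for the norm inequality. The paper's execution is slightly more economical: it excludes $y=0$ when $x\ne0$ directly from the inclusion $x\in y+(f(y)-r)\partial f(y)$ (differentiability at $0$ gives $\partial f(0)=\{f'(0)\}=\{0\}$, forcing $x=0$), and then the single choice $z=0$ yields $y(x-y)>0$, which already pins $y$ strictly between $0$ and $x$ without your additional appeal to $z=x$ and strict monotonicity of $f$.
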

\begin{proof}
We use Fact~\ref{f:epi}. 
Observe that $r<f(y)$ and hence that $f(y)-r>0$. 
Choosing $z=0$ gives
$-y(x-y)\leq -f(y)(f(y)-r) \leq 0$. 
If $x =0$, this implies $y^2 \leq 0$, and so $y =0$.
Assume that $x\neq 0$.
Then $y\neq 0$ since $y =0$ implies $x =0 +(f(0) -r)f'(0) =0$. 
We obtain $-f(y)(f(y)-r)<0$, and then $-y(x-y)<0$, i.e., $y(x-y)>0$.
It follows that $y\in\left]0,x\right[$ if $x >0$, 
and $y\in \left]x,0\right[$ if $x <0$.
Finally, since $P_B$ is firmly nonexpansive (see, e.g., 
\cite[Proposition~4.8]{BC2011}) and 
$P_B(0,0)=(0,0)$, we obtain 
\begin{equation}
\|(x,r)-(0,0)\|^2 \geq \|(y,f(y))-(0,0)\|^2 +
\|(x,r)-(y,f(y))\|^2,
\end{equation}
which completes the proof.
\end{proof}

We now turn to the sequence generated by the method of
alternating projections.
We assume without loss of generality that
\begin{subequations}
\label{e:mapseq}
\begin{empheq}[box=\mybluebox]{equation}
x_0\in\RPP\cap\dom f,\;\;
a_0 = (x_0,0)\in A,\;\;
\end{empheq}
and 
\begin{empheq}[box=\mybluebox]{equation}
(\forall\nnn)\;\;
a_{n+1} = P_AP_B(a_n) = (x_{n+1},0).
\end{empheq}
\end{subequations}

Combining Fact~\ref{f:map}, \eqref{e:fAB} and \eqref{e:mapseq},
we learn that 
\begin{equation}
x_0 > x_1 > \cdots > x_n \downarrow 0.
\end{equation}

We are ready for our first result on the lack of linear
convergence for MAP. 

\begin{theorem}
\label{t:MAP}
The following hold:
\begin{equation}
\label{e:150117a}
(\forall\nnn)\quad
x_{n+1}(x_{n+1}-x_n) + f^2(x_{n+1}) \leq 0,
\end{equation}
\begin{equation}
\label{e:map}
(\forall\nnn)\quad 
x_n = x_{n+1} + f(x_{n+1})x_{n+1}^*, \text{ where }
x_{n+1}^*\in\partial f(x_{n+1}), 
\end{equation}
and $x_n\to 0$ sublinearly, i.e., 
\begin{equation}
\label{e:sublinear}
\frac{x_{n+1}}{x_n}\to 1.
\end{equation}
If $f$ is differentiable on some interval $[0,\delta]$, where
$\delta\in\RPP$, 
and there exists $q\in\RR$ such that 
\begin{equation}\label{e:lim}
\lim_{x\downarrow 0} \frac{f(x)f'(x)}{x^q} = c_q \in \RPP,
\end{equation}
then 
$x_n\to 0$ logarithmically, i.e., 
\begin{equation}
\frac{x_{n+1} -x_{n+2}}{x_{n} -x_{n+1}} \to 1.
\end{equation}
\end{theorem}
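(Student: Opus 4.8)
The plan is to read \eqref{e:150117a} and \eqref{e:map} off the variational characterization of the epigraphical projection, bootstrap from there to the sublinearity \eqref{e:sublinear}, and then feed the resulting recursion into the machinery of Section~\ref{s:aux} to obtain the logarithmic rate. For the first two identities, observe that since $x_n>0$ and $f(x_n)>0$, the point $a_n=(x_n,0)$ lies in $(\dom f\times\RR)\smallsetminus B$, and by \eqref{e:mapseq} its $B$-projection is $(x_{n+1},f(x_{n+1}))$. Applying Fact~\ref{f:epi} with $(x,r)=(x_n,0)$ and $y=x_{n+1}$ gives $x_n\in x_{n+1}+\big(f(x_{n+1})-0\big)\partial f(x_{n+1})$, which is \eqref{e:map}; choosing the test point $z=0\in\dom f$ in the inequality of Fact~\ref{f:epi} yields $-x_{n+1}(x_n-x_{n+1})\le\big(f(0)-f(x_{n+1})\big)f(x_{n+1})=-f^2(x_{n+1})$, i.e.\ \eqref{e:150117a}. (Equivalently, \eqref{e:150117a} drops out of \eqref{e:map} together with the subgradient inequality $0=f(0)\ge f(x_{n+1})+x_{n+1}^*(0-x_{n+1})$.)

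For \eqref{e:sublinear} I would exploit that $f$ is convex with $f(0)=0$ and $f'(0)=0$. Dividing \eqref{e:map} by $x_{n+1}$ gives $x_n/x_{n+1}=1+\big(f(x_{n+1})/x_{n+1}\big)x_{n+1}^*$. The secant slopes $f(x)/x$ are nondecreasing and tend to $f'_+(0)=0$ as $x\downarrow 0$, while $0<x_{n+1}^*\le f'_+(x_{n+1})$ and $f'_+(x)\to f'_+(0)=0$; hence both factors vanish and $x_n/x_{n+1}\to 1$.

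For the logarithmic rate, once $n$ is large enough that $x_{n+1}\in[0,\delta]$ we have $x_{n+1}^*=f'(x_{n+1})$, so \eqref{e:map} becomes $x_n=x_{n+1}+\delta_n x_{n+1}^q$ with $\delta_n:=f(x_{n+1})f'(x_{n+1})/x_{n+1}^q$. Hypothesis \eqref{e:lim} gives $\delta_n\to c_q\in\RPP$, and it also forces $q>1$: by convexity $f(x)/x\to 0$ and $f'(x)\to 0$, so $f(x)f'(x)/x\to 0$, which (since $x^q\ge x$ for small $x$ when $q\le 1$) is incompatible with $f(x)f'(x)/x^q\to c_q>0$ unless $q>1$. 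Combined with the sublinearity $x_n/x_{n+1}\to 1$ just proved, the tail of $(x_n)_\nnn$ then satisfies the hypotheses of Example~\ref{ex:scheissruecken} with $1<q<\infty$; since passing to a tail does not affect the asymptotics, this yields $x_n\to 0$ logarithmically (and in fact $x_n\sim(1/n)^{1/(q-1)}$).

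The step I expect to be the main obstacle is \eqref{e:sublinear}: the ingredients — $f(x)/x\to f'_+(0)=0$, $f'_+(x)\to f'_+(0)=0$, and $x_{n+1}^*\le f'_+(x_{n+1})$ — are elementary one-dimensional convexity facts, but they must be assembled with some care, and one should first note that $0\in\inte\dom f$ (guaranteed because $f$ is even, proper, and $f'(0)$ is finite, so $\dom f$ is a symmetric interval with nonempty interior). Everything else is bookkeeping on top of Fact~\ref{f:epi} and the auxiliary results of Section~\ref{s:aux}.
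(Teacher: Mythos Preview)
Your argument is correct and, for \eqref{e:150117a}, \eqref{e:map}, and \eqref{e:sublinear}, matches the paper's proof almost verbatim: the paper likewise extracts \eqref{e:150117a} from the firm-nonexpansiveness inequality of Corollary~\ref{c:epi} (which is exactly your test $z=0$ in Fact~\ref{f:epi}), obtains \eqref{e:map} from Fact~\ref{f:epi}, and proves \eqref{e:sublinear} by dividing \eqref{e:map} by $x_{n+1}$ and sending both factors $f(x_{n+1})/x_{n+1}$ and $x_{n+1}^*$ to $f'(0)=0$.

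The one genuine difference is how you obtain the logarithmic rate. The paper does not go through Example~\ref{ex:scheissruecken}; instead it simply writes
\[
\frac{x_{n+1}-x_{n+2}}{x_n-x_{n+1}}
=\frac{f(x_{n+2})f'(x_{n+2})}{f(x_{n+1})f'(x_{n+1})}
=\frac{\;\dfrac{f(x_{n+2})f'(x_{n+2})}{x_{n+2}^q}\;}{\;\dfrac{f(x_{n+1})f'(x_{n+1})}{x_{n+1}^q}\;}\Big(\frac{x_{n+2}}{x_{n+1}}\Big)^q
\;\to\;\frac{c_q}{c_q}\cdot 1^q=1,
\]
which works for \emph{any} $q\in\RR$ and avoids the preliminary step of showing $q>1$. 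Your route through Example~\ref{ex:scheissruecken} is also valid (your argument that $q>1$ is forced is correct, and the paper establishes the same thing in Proposition~\ref{p:0621}\ref{p:0621.0}); it has the bonus of yielding the exact asymptotic $x_n\sim(1/n)^{1/(q-1)}$ already here, which the paper postpones to Proposition~\ref{p:0621}\ref{p:0621.2}. So the trade-off is: the paper's direct ratio computation is shorter and needs no side condition on $q$, while your approach is slightly longer but delivers more.
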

\begin{proof}
Corollary~\ref{c:epi} implies \eqref{e:150117a}. 
Using Fact~\ref{f:epi}, we have \eqref{e:map}, which yields 
\begin{equation}
\frac{x_n}{x_{n+1}}=1 +
\frac{f(x_{n+1})-f(0)}{x_{n+1}}x_{n+1}^* \to 1+f'(0)f'(0)=1
\end{equation}
because of $f'(0)=0$ and \cite[Proposition~17.32]{BC2011}. 
This gives \eqref{e:sublinear}.
Now suppose that $f$ is differentiable on $\left[0,\delta\right]$
and \eqref{e:lim} holds. 
Hence, using also \eqref{e:map}, 
\begin{equation}
\frac{x_{n+1} -x_{n+2}}{x_n-x_{n+1}}  
=\frac{\displaystyle
\frac{f(x_{n+2})f'(x_{n+2})}{x_{n+2}^q}}{\displaystyle\frac{f(x_{n+1})f'(x_{n+1})}{x_{n+1}^q}} 
\left( \frac{x_{n+2}}{x_{n+1}} \right)^q
\to\frac{c_q}{c_q}\cdot 1^q =1,
\end{equation}
as claimed. 
\end{proof}

\begin{remark}
The function $f$ satisfies \eqref{e:lim} with $q=2a-1$ and $c_q =
a\varphi^2(0)$ whenever $f(x) =x^a \varphi(x)$, where
$a\in\RR\smallsetminus\{0\}$, $\delta\in\RPP$, $\varphi$ is differentiable
on $[0,\delta]$, $\varphi'$ is continuous at $0$, and $\varphi(0) \ne 0$.
\end{remark}

\begin{proposition}
\label{p:0621}
Suppose that $f$ is differentiable on $[0,\delta]$, where
$\delta\in\RPP$.
Set 
\begin{equation}
\big(\forall q\in\left[1,\pinf\right[\big)\quad
c_q = \lim_{x\downarrow 0} \frac{f(x)f'(x)}{x^q},
\end{equation}  
where $c_q$ is either undefined if the limit does not exist or in
$[0,\pinf]$. 
Let $q\in\left[1,\pinf\right[$. 
Then the following hold:
\begin{enumerate}
\item
\label{p:0621.0}
$c_1=0$. 
If $c_q = 0$, then $c_{q'} =0$ for $1 \leq q' <q$.  
If $c_q >0$, then $c_{q'} =\pinf$ for $q' >q$.
\item
\label{p:0621.1}
If $c_q=0$, then 
\begin{equation}\label{e:cq=0}
\frac{\displaystyle\frac{x_n}{x_{n+1}}-1}{x_{n+1}^{q-1}} \to 0
\end{equation}  
and 
\begin{equation}\label{e:cq=0ab}
(\forall\ve\in\RPP)(\exi m\in\NN)(\forall n\geq m)\quad
x_n \geq 
\begin{cases}
\big( \exp(-\ve)\big)^n, &\text{when } q =1; \\[+4mm]
\displaystyle \frac{1}{\big((q-1)n\ve\big)^{1/(q-1)}}, &\text{when } q >1.
\end{cases}
\end{equation}
\item
\label{p:0621.2}
If $c_q>0$, then $q>1$ and 
$\displaystyle \frac{x_n}{\left( \frac{1}{n} \right)^{1/(q-1)}} 
\to \frac{1}{\big((q-1)c_q\big)^{1/(q-1)}}$. 
\end{enumerate}
\end{proposition}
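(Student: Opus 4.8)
The plan is to reduce the whole statement to the explicit recursion in \eqref{e:map} combined with the auxiliary lemmas of Section~\ref{s:aux}. Since $f$ is differentiable on $[0,\delta]$ and $x_n\downarrow 0$, there is $N\in\NN$ with $0<x_{n+1}<\delta$ for all $n\geq N$; for such $n$ the subdifferential $\partial f(x_{n+1})$ is the singleton $\{f'(x_{n+1})\}$, so \eqref{e:map} reads
\begin{equation*}
x_n = x_{n+1}+f(x_{n+1})f'(x_{n+1}),
\end{equation*}
and moreover $f(x_{n+1})>0$ and $f'(x_{n+1})\geq f(x_{n+1})/x_{n+1}>0$ by convexity. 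Hence, for $n\geq N$, one may put $\delta_n := f(x_{n+1})f'(x_{n+1})/x_{n+1}^q\in\RPP$ and $\rho_n := f(x_{n+1})f'(x_{n+1})/x_n^q\in\RPP$, so that $x_n=x_{n+1}+\delta_n x_{n+1}^q = x_{n+1}+\rho_n x_n^q$ and, since $x_{n+1}\downarrow 0$, $\delta_n\to c_q$ by the very definition of $c_q$. All three parts then come from feeding this recursion into the appropriate result of Section~\ref{s:aux}; re-indexing the sequences to start at $N$ changes none of the asymptotic conclusions.

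For~\ref{p:0621.0}: write $f(x)f'(x)/x=(f(x)/x)f'(x)$; the first factor tends to $f'_+(0)=f'(0)=0$ by differentiability of $f$ at $0$, and the second tends to $f'(0)=0$ since a convex function differentiable on an interval has a continuous derivative there; thus $c_1=0$. If $c_q=0$ and $1\leq q'<q$, then $f(x)f'(x)/x^{q'}=(f(x)f'(x)/x^q)\,x^{q-q'}\to 0$, giving $c_{q'}=0$; if $c_q>0$ and $q'>q$, then $f(x)f'(x)/x^{q'}=(f(x)f'(x)/x^q)/x^{q'-q}\to\pinf$, giving $c_{q'}=\pinf$.

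For~\ref{p:0621.1}: from $x_n=x_{n+1}+f(x_{n+1})f'(x_{n+1})$ we get $(x_n/x_{n+1}-1)/x_{n+1}^{q-1}=\delta_n\to c_q=0$, which is \eqref{e:cq=0}. For \eqref{e:cq=0ab}, note $x_n\geq x_{n+1}=x_n-\rho_n x_n^q$, while $\rho_n(x_n^q/x_{n+1}^q)=\delta_n\to 0$, so $\overline\rho:=\varlimsup_\ntoinf\rho_n(x_n^q/x_{n+1}^q)=0\in\RP$; Example~\ref{ex:anotherjon}, applied with $\beta_n=x_n$, then yields precisely the two lower bounds (the constant being $\gamma=\exp(-\ve)\in\zeroun$ when $q=1$). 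For~\ref{p:0621.2}: $c_q>0$ together with $c_1=0$ forces $q>1$, hence $1<q<\pinf$; in the recursion $x_n=x_{n+1}+\delta_n x_{n+1}^q$ we have $\delta_n\to c_q\in\RPP$ and $x_n/x_{n+1}\to 1$ by \eqref{e:sublinear}, so Example~\ref{ex:scheissruecken} gives $x_n/(1/n)^{1/(q-1)}\to 1/((q-1)c_q)^{1/(q-1)}$, as claimed.

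The only genuinely delicate point is the passage from the set-valued identity \eqref{e:map} to a scalar recursion governed by $f\cdot f'$, together with the verification that $f$ and $f'$ are strictly positive on $\left]0,\delta\right[$ so that $\delta_n$ and $\rho_n$ really lie in $\RPP$; beyond that, everything is a matter of matching our recursion to the templates of Section~\ref{s:aux}. I would also underline that the continuity of $f'$ at $0$ invoked in~\ref{p:0621.0} is a standard consequence of convexity plus differentiability on an interval, not an extra assumption.
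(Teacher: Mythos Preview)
Your proof is correct and follows essentially the same route as the paper's: both reduce \eqref{e:map} to the scalar recursion $x_n=x_{n+1}+f(x_{n+1})f'(x_{n+1})$, rewrite it as $x_n=x_{n+1}+\delta_n x_{n+1}^q$ (respectively $x_{n+1}=x_n-\rho_n x_n^q$), and feed the result into Example~\ref{ex:anotherjon} for \ref{p:0621.1} and Example~\ref{ex:scheissruecken} for \ref{p:0621.2}. The only cosmetic differences are that the paper obtains $c_1=0$ via L'H\^opital's rule while you invoke differentiability at $0$ and continuity of $f'$ directly, and that you are somewhat more explicit in verifying $\overline{\rho}=0$ and the positivity of $\delta_n,\rho_n$ than the paper is.
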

\begin{proof} \ \ref{p:0621.0}:
Using L'H\^opital's rule, we have
$\lim_{x \downarrow 0} \frac{f(x)}{x} =\lim_{x \downarrow 0}
\frac{f'(x)}{1} =f'(0) =0$ and hence
$c_1 = 
\lim_{x \downarrow 0} \frac{f(x)f'(x)}{x} = 0$. 
The remaining statements follow now readily. 

\ref{p:0621.1}:
It follows from \eqref{e:map} that
\begin{equation}
\frac{\frac{x_n}{x_{n+1}}-1}{x_{n+1}^{q-1}} 
=\frac{f(x_{n+1})f'(x_{n+1})}{x_{n+1}^q} \to c_q =0;
\end{equation}
thus, \eqref{e:cq=0} holds.
Now write 
\begin{equation}
x_{n+1} = 
 x_{n} - \frac{f(x_{n+1})f'(x_{n+1})}{x_{n+1}^q}
\left(\frac{x_{n+1}}{x_n}\right)^qx_n^q
= x_n - \rho_n x_n^q,
\end{equation}
and note that $\rho_n \to 0$
because $x_{n+1}/x_{n}\to 1$ and $c_q=0$. 
Thus, \eqref{e:cq=0ab} holds due to Example~\ref{ex:anotherjon}.

\ref{p:0621.2}: We must have $q>1$ since otherwise $c_q=c_1=0$ by
\ref{p:0621.0}, which is absurd. From \eqref{e:map}, we have
\begin{equation}
\frac{x_n}{x_{n+1}} = 1 + \frac{f(x_{n+1})f'(x_{n+1})}{x_{n+1}}
\to 1
\end{equation}
and also, for every $\nnn$,
\begin{equation}
x_n = x_{n+1} + f(x_{n+1})f'(x_{n+1})
= x_{n+1} + \frac{f(x_{n+1})f'(x_{n+1})}{x_{n+1}^q}x_{n+1}^q.
\end{equation}
The conclusion therefore follows from Example~\ref{ex:scheissruecken}. 
\end{proof}

\begin{example}[$\frac{1}{p}|x|^p$, where $p>1$]
\label{ex:MAPx^p}
Suppose that $f(x) = \tfrac{1}{p}|x|^p$, where $1<p<\pinf$.
Let $x\in\RPP$. 
Then $f(x) = \tfrac{1}{p}x^p$ and $f'(x) = x^{p-1}$. 
Setting $q = 2p-1>1$, we have $c_q = \tfrac{1}{p} >0$, 
and so $x_n \to 0$ logarithmically, using Theorem~\ref{t:MAP}. 
Moreover, by Proposition~\ref{p:0621}\ref{p:0621.2},
\begin{equation}
\label{e:150117b}
\frac{x_n}{\left( \frac{1}{n} \right)^{1/(2p-2)}} 
\to \frac{1}{\left( \frac{2p-2}{p} \right)^{1/(2p-2)}}.
\end{equation}
For a couple of cases, one can actually 
invert \eqref{e:map} and simplify \eqref{e:150117b}:
\begin{equation}
p = \frac{3}{2} 
\quad\Rightarrow\quad
\frac{x_n}{1/n} \to 
\frac{3}{2}
\;\;\text{and}\;\;
x_{n+1} = \frac{\sqrt{9+24x_n}-3}{4};
\end{equation}
and 
\begin{equation}
p=2 
\quad\Rightarrow\quad
\frac{x_n}{1/\sqrt{n}} \to 1 
\;\;\text{and}\;\;
x_{n+1} = 
\frac{1}{3}{\frac { \left( 27\,x_n+3\, \sqrt{81\,{x_n^2}+24}
\right) ^{2/3}-6}{\sqrt [3]{27\,x_n+3\, \sqrt{81\,{x_n^2}+24}}}}.
\end{equation}
\end{example}

\begin{example}[$R -\sqrt{R^2-x^2}$]
\label{ex:mapball}
Suppose that $R\in\RPP$ and that 
$f(x) =R -\sqrt{R^2-x^2}$ on its domain $[-R,R]$. 
Let $\nnn$. 
Then $f'(x) =\frac{x}{\sqrt{R^2-x^2}}$, and by \eqref{e:map},
\begin{equation}
x_n =x_{n+1} +\left(R -\sqrt{R^2-x_{n+1}^2}\right)\frac{x_{n+1}}{\sqrt{R^2-x_{n+1}^2}} 
=\frac{Rx_{n+1}}{\sqrt{R^2-x_{n+1}^2}}.
\end{equation}
It follows that
\begin{equation}
x_{n+1} =\frac{Rx_n}{\sqrt{x_n^2+R^2}},
\end{equation}
and also $\frac{R^2}{x_{n+1}^2} =1 +\frac{R^2}{x_n^2}$. 
Hence, $\frac{R^2}{x_n^2} =n +\frac{R^2}{x_0^2}$, which yields
the \emph{explicit} formula 
\begin{equation}
x_n =\frac{Rx_0}{\sqrt{nx_0^2+R^2}} = \frac{R}{\sqrt{n+(R/x_0)^2}} \sim
\frac{R}{\sqrt{n}},
\end{equation}
which shows that $x_n\to 0$ logarithmically. 
\end{example}

\begin{example}
Suppose that 
$f(x) = \exp(|x|)-|x|-1$. 
Then $f(x)f'(x) = \frac{1}{2}x^3 +\frac{5}{12}x^4
+\frac{5}{24}x^5 + O(x^6)$ 
on $\RPP$. Thus, 
Proposition~\ref{p:0621}\ref{p:0621.2} with
$q=3$ and $c_q =\frac{1}{2}$ yields
$x_n/(1/\sqrt{n}) \to 1$.
\end{example}

\begin{example}
Suppose that $f=\cosh$.
Then $f(x)f'(x)=\frac{1}{2}x^3 + \frac{1}{8}x^5 + O(x^7)$ and
hence again by Proposition~\ref{p:0621}\ref{p:0621.2} with
$q=3$ and $c_q =\frac{1}{2}$ yields
${x_n}/(1/\sqrt{n}) \to 1$.
\end{example}

\begin{example}[extremely slow convergence]
Suppose that 
$f(x) = \exp(-x^{-2})$ with domain $[-\sqrt{2/3},\sqrt{2/3}]$.
Then $c_q = 0$ in Proposition~\ref{p:0621}\ref{p:0621.1} and
hence, according to \eqref{e:cq=0},
\begin{equation}
\frac{\frac{x_n}{x_{n+1}}-1}{x_{n+1}^{q-1}}\to 0
\quad\text{for \emph{every} $q>1$}.
\end{equation}
Furthermore, 
the convergence $(x_n)_\nnn$ to $0$ is
extremely slow in the sense that 
\begin{equation}
\frac{1}{x_n} \leq
O(n^{1/p})\quad\text{for \emph{every} $p>0$.} 
\end{equation}
\end{example}

\begin{remark}[MAP sequence is essentially a PPA sequence]
\label{r:map=ppa}
Note that, by \eqref{e:map},
\begin{equation}
(\forall\nnn)\quad
x_n \in (\Id + f\cdot\partial f)(x_{n+1}) = 
\big(\Id + \partial \tfrac{1}{2}f^2\big)(x_{n+1})
\end{equation}
and so 
\begin{equation}
(\forall\nnn)\quad x_{n+1} = P_{\frac{1}{2}f^2}(x_n).
\end{equation}
Since $f^2$ is convex (by, e.g.,
\cite[Proposition~8.19]{BC2011}), 
we see that the sequence $(a_n)_\nnn = (x_n,0)_\nnn$ generated by
MAP is essentially the same as the sequence generated by PPA for
the function  $\tfrac{1}{2}f^2$!
This useful connection will be further discussed after we recall
a special case of a result due to G\"uler.
\end{remark}

\begin{fact}[G\"uler]
\label{f:Guler}
{\rm (See \cite[Theorem~3.1]{Guler}.)}
Let $H$ be a real Hilbert space,
let $f\colon H\to\RX$ be convex, lower semicontinuous, and
suppose that the sequence $(x_n)_\nnn$ generated by the PPA converges
(strongly) to some minimizer $z$ of $f$. 
Then $f(x_n)-f(z) = o(1/n)$, i.e., 
$n(f(x_n) -f(z)) \to 0$.
\end{fact}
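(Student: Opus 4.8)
The plan is to run the classical energy/telescoping analysis of the proximal point iteration and then close with an elementary observation about monotone summable sequences. Throughout I would write $a_n=f(x_n)-f(z)\ge 0$ (the nonnegativity using that $z$ is a minimizer) and $h_n=\tfrac12\|x_n-z\|^2$, and recall that $x_{n+1}=P_f(x_n)$ is characterised by $x_n-x_{n+1}\in\partial f(x_{n+1})$, so that the subgradient inequality $f(y)\ge f(x_{n+1})+\scal{x_n-x_{n+1}}{y-x_{n+1}}$ holds for every $y\in H$.

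First I would specialise this inequality to the two natural test points. Taking $y=x_n$ gives $f(x_n)-f(x_{n+1})\ge\|x_n-x_{n+1}\|^2\ge 0$, so $(f(x_n))_\nnn$, hence $(a_n)_\nnn$, is nonincreasing. Taking $y=z$ gives $f(z)\ge f(x_{n+1})+\scal{x_n-x_{n+1}}{z-x_{n+1}}$, i.e. $a_{n+1}\le\scal{x_n-x_{n+1}}{x_{n+1}-z}$; then the polarisation identity $\scal{x_n-x_{n+1}}{x_{n+1}-z}=h_n-h_{n+1}-\tfrac12\|x_n-x_{n+1}\|^2$ yields $a_{n+1}\le h_n-h_{n+1}$.

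Second, I would telescope: summing $a_{n+1}\le h_n-h_{n+1}$ over $n=0,\dots,N-1$ gives $\sum_{n=1}^{N}a_n\le h_0-h_N\le h_0=\tfrac12\|x_0-z\|^2<\pinf$, hence $\sum_{\nnn}a_n<\pinf$. Finally, the crux is to upgrade this to $n a_n\to 0$: since $(a_n)_\nnn$ is nonincreasing, for each $n$ one has $\lfloor n/2\rfloor\,a_n\le\sum_{k=\lceil n/2\rceil}^{n}a_k$, and the right-hand side is a tail of the convergent series $\sum a_n$, hence tends to $0$; therefore $n a_n\to 0$, which is exactly $f(x_n)-f(z)=o(1/n)$.

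There is no serious obstacle here; the only place needing a little care is that last step, since the naive bound $n a_n\le\sum_{k=1}^{n}a_k$ merely gives $O(1/n)$, and the sharper $o(1/n)$ requires estimating $a_n$ against a \emph{tail} of the series rather than the whole sum. It is also worth remarking that the hypothesis $x_n\to z$ is not really used beyond guaranteeing that a minimizer $z$ exists: the bound $\sum a_n\le h_0$ involves only $x_0$ and $z$, so the rate $o(1/n)$ in fact holds for every minimizer $z$ whenever the minimum is attained.
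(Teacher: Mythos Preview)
Your argument is correct. The key steps --- monotonicity of $(f(x_n))_\nnn$ from the subgradient inequality at $y=x_n$, the estimate $a_{n+1}\le h_n-h_{n+1}$ from the subgradient inequality at $y=z$ together with the identity $\scal{x_n-x_{n+1}}{x_{n+1}-z}=h_n-h_{n+1}-\tfrac12\|x_n-x_{n+1}\|^2$, and the tail trick upgrading summability of a nonincreasing sequence to $na_n\to 0$ --- are all sound.

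However, this is a genuinely different route from the paper's proof. The paper works with the \emph{reciprocal} $\varphi(x)=1/\big(f(x)-f(z)\big)$ and shows, via a somewhat more intricate chain of inequalities (including a secant-line estimate for $t\mapsto 1/(1+t)$ on $[0,\tfrac12]$), that $\varphi(x_{n+1})-\varphi(x_n)\ge \tfrac{2}{3}\|x_n-z\|^{-2}$. Since $x_n\to z$ strongly, the right-hand side tends to $+\infty$, and the Stolz--Ces\`aro corollary from Section~\ref{s:aux} then forces $\varphi(x_n)/n\to+\infty$, i.e., $n(f(x_n)-f(z))\to 0$. Thus the paper's argument \emph{uses} the strong-convergence hypothesis in an essential way (to make $\|x_n-z\|^{-2}$ blow up), whereas your argument does not: as you correctly observe, your telescoping bound $\sum_n a_n\le \tfrac12\|x_0-z\|^2$ needs only the existence of a minimizer $z$, not convergence to it. Your proof is therefore both shorter and slightly stronger. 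What the paper's approach buys is consistency with the Stolz--Ces\`aro machinery developed in Section~\ref{s:aux}, and it yields the quantitative increment $\varphi(x_{n+1})-\varphi(x_n)\ge \tfrac{2}{3}\|x_n-z\|^{-2}$, which could in principle be exploited for sharper rates when one has control on how fast $\|x_n-z\|\to 0$.
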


Combining Remark~\ref{r:map=ppa} with
Fact~\ref{f:Guler} results in the following:

\begin{corollary}
The MAP sequence $(a_n)_\nnn = (x_n,0)_\nnn$ satisfies 
\begin{equation}
\label{e:1205a}
n f^2(x_n)\to 0.
\end{equation}
\end{corollary}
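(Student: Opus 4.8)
The plan is to combine two facts established earlier in the excerpt: Remark~\ref{r:map=ppa}, which identifies the MAP sequence $(x_n)_\nnn$ with the PPA sequence for the convex function $\tfrac{1}{2}f^2$, and Fact~\ref{f:Guler} (G\"uler's theorem), which asserts that the PPA function values converge to the minimum at rate $o(1/n)$. First I would recall that $g \coloneqq \tfrac{1}{2}f^2$ is convex, lower semicontinuous, and proper on $\RR$ (convexity being justified by \cite[Proposition~8.19]{BC2011} as noted in Remark~\ref{r:map=ppa}, since $f\geq 0$ is convex), and that its unique minimizer is $z=0$ with minimum value $g(0)=\tfrac{1}{2}f^2(0)=0$.

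Next I would invoke Remark~\ref{r:map=ppa}: for every $\nnn$ we have $x_{n+1}=P_{g}(x_n)$, so $(x_n)_\nnn$ is precisely a PPA sequence for $g$. By Fact~\ref{f:map} together with \eqref{e:fAB} and \eqref{e:mapseq}, we already know $x_n\downarrow 0$, hence $(x_n)_\nnn$ converges (strongly, but we are in $\RR$) to the minimizer $z=0$ of $g$. Thus the hypotheses of Fact~\ref{f:Guler} are met with $H=\RR$ and the function $g$.

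Finally, applying Fact~\ref{f:Guler} to $g$ and the sequence $(x_n)_\nnn$ yields $n\big(g(x_n)-g(z)\big)\to 0$, i.e.\ $n\big(\tfrac{1}{2}f^2(x_n)-0\big)\to 0$, which is exactly $nf^2(x_n)\to 0$ after multiplying by the constant $2$. This establishes \eqref{e:1205a}.

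There is essentially no obstacle here: the corollary is a direct splicing of the two preceding results, and the only point requiring a moment's care is confirming that the PPA iteration for $g=\tfrac12 f^2$ has $0$ as its minimizer with value $0$ and that the MAP sequence indeed converges to that minimizer --- both of which are immediate from the standing assumptions \eqref{e:fAB} (namely $f(0)=0$, $f>0$ otherwise) and from $x_n\downarrow 0$ in \eqref{e:mapseq}. The only subtlety worth flagging in the write-up is that Fact~\ref{f:Guler} is stated for a general Hilbert space, so one should explicitly note that we specialize to $H=\RR$; no further work is needed.
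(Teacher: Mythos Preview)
Your proposal is correct and follows exactly the paper's own approach: the corollary is stated immediately after Remark~\ref{r:map=ppa} and Fact~\ref{f:Guler} precisely because it is obtained by combining them, and the paper's proof consists of a single sentence to that effect. Your write-up simply makes the verification of the hypotheses explicit, which is fine.
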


\begin{example}[$\frac{1}{p}|x|^p$ revisited]
Suppose that $f(x)=\frac{1}{p}|x|^p$, where $1<p$. 
By Example~\ref{ex:MAPx^p},
\begin{equation}
x_n\sim \left( \frac{1}{n} \right)^{1/(2p-2)}.
\end{equation}
Then \eqref{e:1205a} becomes
\begin{equation}
n f^2(x_n)\sim n x_n^{2p} \sim \frac{1}{n^{1/(p-1)}} \to 0.
\end{equation}
Note that this also shows that this consequence of 
Fact~\ref{f:Guler} is {\em sharp} 
in the sense that it cannot be improved to 
$n^{1+\ve} f^2(x_n)\to 0$, where $\ve>0$. 
(Indeed, if $n^{1+\ve} f^2(x_n)\to 0$, 
then we obtain a contradiction for sufficiently large $p$.)
\end{example}

\section{Douglas--Rachford algorithm (DRA)}

\label{s:dra}

Finally, we investigate the Douglas--Rachford algorithm. 
As in Section~\ref{s:map}, we assume that 
that 
\begin{subequations}\label{e:drafAB}
\begin{empheq}[box=\mybluebox]{equation}
f \colon \RR\to\RX
\quad\text{is convex, lower semicontinuous, and proper,}
\end{empheq}
with 
\begin{empheq}[box=\mybluebox]{equation}
\text{$f$ even, $f(0)=0$, $f>0$ otherwise, and $f'(0)=0$,}
\end{empheq}
and that 
\begin{empheq}[box=\mybluebox]{equation}
A = \RR \times \{0\}
\quad\text{and}\quad
B= \epi f.
\end{empheq}
\end{subequations}
We now turn to the sequence generated by the 
Douglas--Rachford algorithm. 
We assume that 
\begin{subequations}
\label{e:draseq}
\begin{empheq}[box=\mybluebox]{equation}
x_0\in\RPP\cap\dom f,\;\;
r_0=0,\;\;
z_0 = (x_0,0)\in A,\;\;
\end{empheq}
and 
\begin{empheq}[box=\mybluebox]{equation}
(\forall\nnn)\quad
z_{n+1} = Tz_n = (x_{n+1},r_{n+1}),
\end{empheq}
where 
\begin{empheq}[box=\mybluebox]{equation}
\label{e:DR}
T = \Id-P_A + P_BR_A.
\end{empheq}
\end{subequations}
Since  $N_{A-B}(0,0)=\RP(0,1)$, we have 
\begin{equation}
A\cap B = \{(0,0)\}
\quad\text{and}\quad
\Fix T = \RP(0,1),
\end{equation}
Hence we deduce from Fact~\ref{f:dra}, \eqref{e:drafAB} and
\eqref{e:draseq} that 
\begin{equation}
x_n\to 0 \;\;\text{and}\;\;
r_n\to r_\infty \in \RP.
\end{equation}

Let us now investigate the effect of carrying out
one DRA step:

\begin{corollary}[one DRA step]
\label{c:DRstep}
Let $(x,r)\in\RR^2$, 
set $(x_+,r_+)= T(x,r)$, and suppose that 
$0<x\in \dom f$ and $0\leq r<f(x)$. 
Then there exists $x_+^*\in\RR$ such that 
\begin{subequations}
\begin{equation}
\label{e:DRstep}
0<x_+ = x -r_+x_+^*<x,\;\;x_+^*\in \partial f(x_+)
\;\;\text{and}\;\;
r_+ = r+f(x_+)>r
\end{equation}
and 
\begin{equation}
x^2+r^2\geq x_+^2 + (r_+-r)^2 + (x-x_+)^2 + r_+^2.
\end{equation}
\end{subequations}
\end{corollary}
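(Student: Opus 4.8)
The plan is to unwind the definition $T=\Id-P_A+P_BR_A$ at the point $(x,r)$ and then apply Fact~\ref{f:epi} and Corollary~\ref{c:epi} to the \emph{reflected} point $R_A(x,r)$.

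First I would compute the reflected point: since $P_A(x,r)=(x,0)$, we get $R_A(x,r)=2P_A(x,r)-(x,r)=(x,-r)$. Because $x\in\dom f$ and $-r\le 0<f(x)$ (the strict inequality uses $x\ne 0$), the point $(x,-r)$ belongs to $(\dom f\times\RR)\smallsetminus B$, so Fact~\ref{f:epi} applies and yields $P_B(x,-r)=(x_+,f(x_+))$, where $x_+$ satisfies $x\in x_+ + \big(f(x_+)-(-r)\big)\partial f(x_+)=x_+ + (f(x_+)+r)\partial f(x_+)$. Consequently
\begin{equation*}
T(x,r)=(x,r)-P_A(x,r)+P_BR_A(x,r)=(x,r)-(x,0)+(x_+,f(x_+))=\big(x_+,\,r+f(x_+)\big),
\end{equation*}
which identifies $r_+=r+f(x_+)$ and turns the inclusion just displayed into $x\in x_+ + r_+\,\partial f(x_+)$. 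Choosing $x_+^*\in\partial f(x_+)$ with $x=x_+ + r_+x_+^*$ then gives the two identities $x_+=x-r_+x_+^*$ and $r_+=r+f(x_+)$ of \eqref{e:DRstep}.

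Next, since $f'(0)=0$, the function $f$ is differentiable at $0$, so Corollary~\ref{c:epi} applies to $(x,-r)$: as $x>0$ it gives $0<x_+<x$, and since $x_+\ne 0$ we have $f(x_+)>0$, whence $r_+=r+f(x_+)>r$; this completes \eqref{e:DRstep}. For the norm estimate, the firm-nonexpansiveness inequality from Corollary~\ref{c:epi} applied to $(x,-r)$ reads
\begin{equation*}
x^2+r^2 \ \geq\ x_+^2 + f(x_+)^2 + (x-x_+)^2 + \big(-r-f(x_+)\big)^2 ,
\end{equation*}
and substituting $f(x_+)=r_+-r$ and $-r-f(x_+)=-r_+$ rewrites the right-hand side as $x_+^2+(r_+-r)^2+(x-x_+)^2+r_+^2$, which is exactly the claimed bound.

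There is essentially no deep step here; the two points requiring care are (i) verifying that the reflected point $(x,-r)$ genuinely lies outside $B$, so that Fact~\ref{f:epi} and Corollary~\ref{c:epi} are applicable, and (ii) tracking the sign change in the second coordinate, so that the quantity ``$f(y)-r$'' appearing in Fact~\ref{f:epi} becomes ``$f(x_+)+r$'' in our situation. Once this bookkeeping is in place, the remainder is a direct substitution.
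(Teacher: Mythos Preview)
Your proof is correct and follows essentially the same route as the paper: reflect to $(x,-r)$, apply Fact~\ref{f:epi} to identify $P_B(x,-r)=(x_+,f(x_+))$ and the subgradient relation, read off $(x_+,r_+)$ from $T=\Id-P_A+P_BR_A$, and then invoke Corollary~\ref{c:epi} for both the strict inequalities $0<x_+<x$ and the firm-nonexpansiveness estimate. If anything, you are slightly more careful than the paper in explicitly checking that $(x,-r)\notin B$ and in spelling out the substitution $f(x_+)=r_+-r$ in the norm inequality.
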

\begin{proof}
First, we note that 
$R_A(x,r) = (x,-r)$. 
Set $(y,s)= P_B(x,-r)$.
By Fact~\ref{f:epi},
\begin{equation}
y = x -(r+f(y))x^*_+\;\;\text{for some}\;\;
x_+^*\in \partial f(y)
\;\;\text{and}\;\;
s = f(y).
\end{equation}
Now, \eqref{e:DR} gives
\begin{equation}
(x_+,r_+) = (\Id-P_A)(x,r) + P_BR_A(x,r) = 
(x,r)-(x,0)+(y,s) = (y,r+s).
\end{equation}
Thus $x_+=y$, 
\begin{equation}
x_+ = x -(r+f(x_+))x_+^*
\;\;\text{and}\;\;
r_+ = r+f(x_+),
\end{equation}
as claimed. 
The rest follows from Corollary~\ref{c:epi}.
\end{proof}

\begin{remark}[DRA step is related to a PPA step]
Consider Corollary~\ref{c:DRstep}. 
Then
\begin{equation}
x_+ = P_{rf + \frac{1}{2}f^2}(x)
\;\;\text{and}\;\;
r_+ = r + f(x_+),
\end{equation}
which reveals a connection between the DRA step and the PPA step 
for $rf+\tfrac{1}{2}f^2$.
\end{remark}

\begin{theorem}[DRA sequence]
\label{t:drseq}
The DRA sequence satisfies
\begin{subequations}
\label{e:DRx_nboth}
\begin{equation}
x_n\downarrow 0
\quad\text{and}\quad
r_n\uparrow r_\infty\in\RPP,
\end{equation}
and for every $\nnn$,
there exits $x_{n+1}^*\in\partial f (x_{n+1})$ such that 
\begin{equation}
\label{e:DRx_n}
0<x_{n+1} = x_n - r_{n+1}x_{n+1}^*<x_n 
\quad \text{and} \quad
r_{n+1}=r_n + f(x_{n+1}).
\end{equation}
\end{subequations}
Now suppose that furthermore that $f''_+(0)$ exists in $[0,\pinf]$. 
Then 
\begin{equation}
\label{e:DRlim}
\frac{x_{n+1}}{x_n} =\frac{1}{1+r_{n+1}\frac{x_{n+1}^*}{x_{n+1}}} \to \frac{1}{1+r_\infty f''_+(0)}
\end{equation}
and exactly one of the following holds:
\begin{enumerate}
\item 
\label{t:drseq1}
$f''_+(0)=\pinf$ and 
$x_n\to 0$ superlinearly. 
\item 
\label{t:drseq2}
$f''_+(0) \in\RPP$ and 
$x_n\to 0$ linearly.
\item 
\label{t:drseq3}
$f''_+(0) =0$ and 
$x_n\to 0$ sublinearly.
If there exists $q\in\RR$ such that
\begin{equation}\label{e:lim2}
\lim_{x\downarrow 0} \frac{f'(x)}{x^q} = c\in\RPP, 
\end{equation}
then $x_n\to 0$ logarithmically; moreover,
if additionally $q>1$, then 
\begin{equation}
\label{e:DRsub}
\frac{x_n}{\left( \frac{1}{n} \right)^{1/(q-1)}} \to \frac{1}{\left( (q-1)r_\infty c \right)^{1/(q-1)}}.
\end{equation}
\end{enumerate}
\end{theorem}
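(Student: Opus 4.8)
The plan is to establish, in order, the recursion \eqref{e:DRx_nboth}, the ratio limit \eqref{e:DRlim}, the three convergence regimes, and finally the logarithmic estimate \eqref{e:DRsub}; the decisive structural fact underlying everything is that the shift coordinate stabilises at a \emph{strictly positive}, finite value $r_\infty$. For the recursion I would argue by induction on $\nnn$ that $x_n>0$ and $r_n\ge 0$, with $r_n>0$ once $n\ge 1$. The inductive step reruns the computation behind Corollary~\ref{c:DRstep}, but one cannot cite that corollary verbatim, since its hypothesis $r<f(x)$ already fails at $n=1$ (where $r_1=f(x_1)$). Instead one checks directly that $R_A(x_n,r_n)=(x_n,-r_n)$ with $-r_n\le 0<f(x_n)$ (since $x_n>0$ forces $f(x_n)>0$), hence $(x_n,-r_n)\notin B=\epi f$; then Fact~\ref{f:epi} and Corollary~\ref{c:epi} give $P_B(x_n,-r_n)=(x_{n+1},f(x_{n+1}))$ with $0<x_{n+1}<x_n$ and $x_n=x_{n+1}+(r_n+f(x_{n+1}))x_{n+1}^*$ for some $x_{n+1}^*\in\partial f(x_{n+1})$, while \eqref{e:DR} gives $z_{n+1}=Tz_n=(x_{n+1},\,r_n+f(x_{n+1}))$. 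So $x_{n+1}=x_n-r_{n+1}x_{n+1}^*$ and $r_{n+1}=r_n+f(x_{n+1})>r_n$, which is \eqref{e:DRx_n}; strict monotonicity of $(x_n)_\nnn$ and $(r_n)_\nnn$ follows, and since $r_1=f(x_1)>0$ while $(r_n)_\nnn$ increases to the (finite) second coordinate $r_\infty$ of the limit $z_\infty\in\Fix T=\RP(0,1)$ furnished by Fact~\ref{f:dra}, we obtain $r_\infty\in\RPP$.

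For the ratio limit, dividing \eqref{e:DRx_n} by $x_{n+1}$ gives $x_n/x_{n+1}=1+r_{n+1}(x_{n+1}^*/x_{n+1})$, so it suffices to prove $x_{n+1}^*/x_{n+1}\to f''_+(0)$. Since $x_n\downarrow 0$, eventually $x_{n+1}$ lies in the right neighbourhood of $0$ on which $f$ is differentiable (implicit in writing $f''_+(0)=\lim_{x\downarrow 0}f'(x)/x$), so $x_{n+1}^*=f'(x_{n+1})$ and, using $f'(0)=0$, $x_{n+1}^*/x_{n+1}=(f'(x_{n+1})-f'(0))/x_{n+1}\to f''_+(0)$ by definition; combined with $r_{n+1}\to r_\infty$ this is \eqref{e:DRlim}. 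The trichotomy is now read off by applying the definition of the types of convergence to $\alpha_n=x_n$ with $q=1$ and limiting ratio $c=1/(1+r_\infty f''_+(0))$: because $r_\infty\in\RPP$, we have $c=0$ (superlinear) when $f''_+(0)=\pinf$, $c\in\zeroun$ (linear) when $f''_+(0)\in\RPP$, and $c=1$ (sublinear) when $f''_+(0)=0$; these cases are mutually exclusive and exhaustive.

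It remains to treat the logarithmic sub-case, i.e.\ $f''_+(0)=0$ together with \eqref{e:lim2}. First, $q>1$ is forced: for $q\le 1$ one has $f'(x)/x^q=(f'(x)/x)\,x^{1-q}\to f''_+(0)\cdot 0=0$ (and $\to f''_+(0)=0$ when $q=1$), contradicting $c\in\RPP$. Using $x_n-x_{n+1}=r_{n+1}f'(x_{n+1})$ one writes $\frac{x_{n+1}-x_{n+2}}{x_n-x_{n+1}}=\frac{r_{n+2}}{r_{n+1}}\cdot\frac{f'(x_{n+2})/x_{n+2}^{q}}{f'(x_{n+1})/x_{n+1}^{q}}\cdot\big(x_{n+2}/x_{n+1}\big)^{q}$, and the three factors tend to $1$ by $r_n\to r_\infty$, by \eqref{e:lim2}, and by $x_{n+1}/x_n\to 1$ respectively, so the convergence is logarithmic. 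For the rate, rewrite \eqref{e:DRx_n} as $x_n=x_{n+1}+\delta_n x_{n+1}^{q}$ with $\delta_n=r_{n+1}\,f'(x_{n+1})/x_{n+1}^{q}\to r_\infty c\in\RPP$; since also $x_n/x_{n+1}\to 1$ and $1<q<\infty$, Example~\ref{ex:scheissruecken} applies and yields \eqref{e:DRsub} (and re-confirms logarithmic convergence).

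The proof is mostly an assembly, but two points demand care and are where I expect the work to concentrate. First, in the recursion step one must not invoke Corollary~\ref{c:DRstep} — whose hypothesis $r<f(x)$ is violated from $n=1$ on — and instead repeat the one-step computation for $r_n\ge 0$, noting that the reflected point $(x_n,-r_n)$ still lies strictly below $B$ and applying Fact~\ref{f:epi}/Corollary~\ref{c:epi} by hand. Second, the entire trichotomy, and in particular the superlinear case (where $r_\infty f''_+(0)=\infty$ forces $c=0$), hinges on $r_\infty>0$; hence the strict increase of $(r_n)_\nnn$ from the positive value $r_1=f(x_1)$, together with the finiteness of $r_\infty$ read off from Fact~\ref{f:dra}, must be secured before the cases are separated.
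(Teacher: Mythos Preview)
Your proof is correct and follows essentially the same route as the paper: derive the recursion from the one-step DRA computation, divide \eqref{e:DRx_n} by $x_{n+1}$ to obtain the ratio identity, pass to the limit using $x_{n+1}^*/x_{n+1}\to f''_+(0)$, read off the trichotomy, and in the sublinear case compute the increment ratio and invoke Example~\ref{ex:scheissruecken}.

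Your treatment is in fact more careful than the paper's in two places. First, you correctly observe that Corollary~\ref{c:DRstep} cannot be cited verbatim for $n\ge 1$ because its stated hypothesis $r<f(x)$ fails (already $r_1=f(x_1)$); the paper simply writes ``\eqref{e:DRx_nboth} follows from Corollary~\ref{c:DRstep}'' and overlooks this. Your fix---redoing the computation with the weaker observation $-r_n\le 0<f(x_n)$ so that $(x_n,-r_n)\notin B$ and Fact~\ref{f:epi}/Corollary~\ref{c:epi} apply---is exactly right, and in fact the proof of Corollary~\ref{c:DRstep} never uses $r<f(x)$, only $r\ge 0$ and $x>0$. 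Second, you supply the short argument that \eqref{e:lim2} together with $f''_+(0)=0$ forces $q>1$, which the paper leaves as an additional hypothesis; this is a nice tightening.
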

\begin{proof}
\eqref{e:DRx_nboth} follows from Corollary~\ref{c:DRstep}. 
Divide \eqref{e:DRx_n} by $x_{n+1}$, solve for $x_n/x_{n+1}$,
then take reciprocals to obtain 
\begin{equation}
\frac{x_{n+1}}{x_n} =\frac{1}{1+r_{n+1}\frac{x_{n+1}^*}{x_{n+1}}}.
\end{equation}
Now assume that  $f''_+(0)$ exists; it belongs to $[0,\pinf]$
because $x>0$ $\Rightarrow$ $f'(x)>0$. 
Since $x_n \downarrow 0$,
we see that 
\begin{equation}
\frac{x_{n+1}^*}{x_{n+1}} =\frac{f'(x_{n+1})}{x_{n+1}} 
=\frac{f'(x_{n+1})-f'(0)}{x_{n+1}-0} \to f''_+(0).
\end{equation}
Altogether, we get \eqref{e:DRlim}. 
Items \ref{t:drseq1} and \ref{t:drseq2} are now clear,
so let us focus on \ref{t:drseq3}.
Obviously $x_n\to 0$ sublinearly if and only if $f''_+(0)=0$
which we henceforth assume, along with \eqref{e:lim2}. 
It follows from \eqref{e:DRx_n} that
\begin{equation}
\frac{x_{n+1} -x_{n+2}}{x_n-x_{n+1}} 
=\frac{r_{n+2}f'(x_{n+2})}{r_{n+1}f'(x_{n+1})}
=\frac{r_{n+2}}{r_{n+1}}\frac{\frac{f'(x_{n+2})}{x_{n+2}^q}}{\frac{f'(x_{n+1})}{x_{n+1}^q}}
\left( \frac{x_{n+2}}{x_{n+1}} \right)^q
\to \frac{r_\infty}{r_\infty}\cdot \frac{c}{c}\cdot 1^q =1;
\end{equation}
hence, $x_n \to 0$ logarithmically.
Finally assume that $q >1$. 
Writing (see \eqref{e:DRx_n}) 
\begin{equation}
x_n =x_{n+1} +r_{n+1}\frac{f'(x_{n+1})}{x_{n+1}^q}x_{n+1}^q 
= x_{n+1} +\delta_n x_{n+1}^q,
\end{equation}
where $\delta_n \to r_\infty c>0$, 
we obtain \eqref{e:DRsub} through Example~\ref{ex:scheissruecken}.
\end{proof}

\begin{example}[$\frac{1}{p}|x|^p$, where $1<p<\pinf$]
\label{ex:DRx^p}
Suppose that 
$f(x)=\frac{1}{p}|x|^p$, where $1<p<\pinf$. 
Then exactly one of the following holds:
\begin{enumerate}
\item 
\label{ex:DRx^p1}
$1<p<2$ \;and\;
$\displaystyle \frac{x_{n+1}}{x_n^{1/(p-1)}} \to
\frac{1}{r_\infty^{1/(p-1)}} >0.$
\item
\label{ex:DRx^p2}
$p=2$ and $x_n\to 0$ linearly with rate $1/(1+r_\infty)$. 
\item
\label{ex:DRx^p3}
$2<p<\pinf$, $x_n\to 0$ logarithmically, and 
$\displaystyle
\frac{x_n}{\left( \frac{1}{n} \right)^{1/(p-2)}} \to
\frac{1}{\left( (p-2)r_\infty\right)^{1/(p-2)}}.
$
\end{enumerate} 
\end{example}
\begin{proof}\ \ref{ex:DRx^p1}:
From \eqref{e:DRx_n}, we obtain
\begin{equation}
\label{e:DRx^p}
(\forall\nnn)\quad 
x_n =x_{n+1} +r_{n+1}x_{n+1}^{p-1}.
\end{equation}
Since $x_n \downarrow 0$ and $p<2$,
we have 
\begin{equation}
(\forall \ve >0)(\exi m \in \NN)(\forall n\geq m) \quad 0 < x_{n+1} <\ve x_{n+1}^{p-1}.
\end{equation}  
Combining yields 
$r_{n+1}x_{n+1}^{p-1} <x_n <(r_{n+1} +\ve)x_{n+1}^{p-1}$. 
In turn, 
$\frac{x_n}{x_{n+1}^{p-1}} \to r_\infty >0$ and
the conclusion follows.

\ref{ex:DRx^p2}: Clear from \eqref{e:DRlim}. 

\ref{ex:DRx^p3}: Apply Theorem~\ref{t:drseq}\ref{t:drseq3} with
$q=p-1>1$ and $c=1$. 
\end{proof}

\begin{remark}[comparison MAP vs DR when $f(x) =\frac{1}{p}|x|^p$]
Suppose $f(x) =\frac{1}{p}|x|^p$, where $1<p<\pinf$.
According to Example~\ref{ex:MAPx^p}, 
the MAP sequence $(x_n)_\nnn$ exhibits logarithmic convergence to
$0$ and 
\begin{equation} 
x_n\sim \left( \frac{1}{n} \right)^{1/(2p-2)}.
\end{equation}
On the other hand, Example~\ref{ex:DRx^p} yields the following
for the DRA sequence $(x_n)_\nnn$: 
\bigskip
\begin{table*}[H] \centering
\begin{tabular}{@{}r|l@{}} \toprule
$p$ & convergence of the DRA sequence $(x_n)_\nnn$ to $0$\\ \midrule
$1<p<2$ & superlinear with order $\displaystyle \frac{1}{p-1}$ \\[+2mm]
$p=2$ & linear with rate $\displaystyle \frac{1}{1+r_\infty}$ \\[+2mm]
$2<p<\pinf$& logarithmic and $\displaystyle x_n\sim \left( \frac{1}{n}
\right)^{1/(p-2)}$\\
\bottomrule
 \end{tabular}
\end{table*}
We conclude that \emph{in all cases}, 
the DRA sequence converges to $0$ \ {\em faster} than
the MAP sequence\footnote{It is interesting to note that
DRA performs better than MAP when $A$ and $B$ are two subspaces
with a small Friedrichs angle (see \cite[Section~8]{BBNPW}).}.
To illustrate this, set $x_0=1$. 
Letting the parameter $p$ range from $1$ to $3$, 
we show in Figure~\ref{fg:1} the first 100 terms of the MAP
sequence $(x_n)_\nnn$ and of the DRA sequence $(x_n)_\nnn$. 
\begin{figure}[H]
  \centering
  \begin{subfigure}{.45\textwidth}
    \centering
    \includegraphics[width=2.8in]{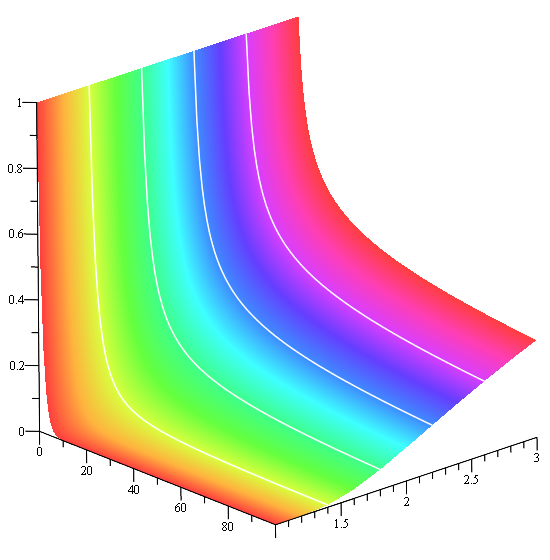}
    \caption{MAP sequence $(x_n)_\nnn$}
    \label{fg:1a}
  \end{subfigure}%
  \hspace*{.1\textwidth}
  \begin{subfigure}{.45\textwidth}
    \centering
    \includegraphics[width=2.8in]{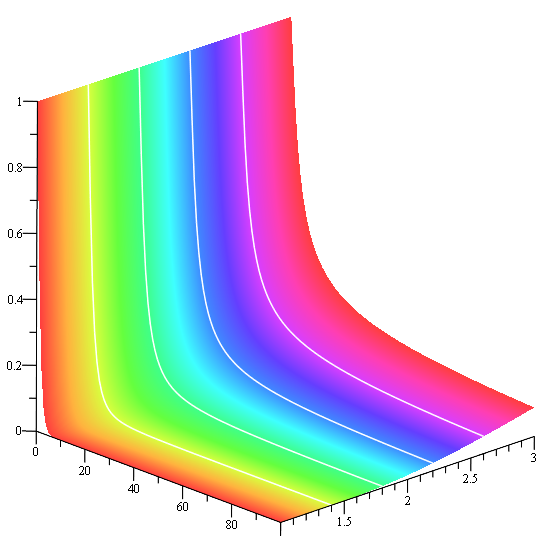}
    \caption{DRA sequence $(x_n)_\nnn$}
    \label{fg:1b}
  \end{subfigure}
\caption{The distance of the first 100 terms to the solution for $p\in[1,3]$}
\label{fg:1}
\end{figure}
\noindent
Although both sequences converge to $0$, 
the solution, the stark contrast in their
speed of convergence is shown in 
Figure~\ref{fg:2} where we plot the quotient sequence of the MAP
sequence divided by the DRA sequence. As predicted by the theory,
the terms tend to $\pinf$ when $1<p<2$ illustrating the much
faster convergence of the DRA sequence. 

\begin{figure}[H]
\centering
\includegraphics[width=4in]{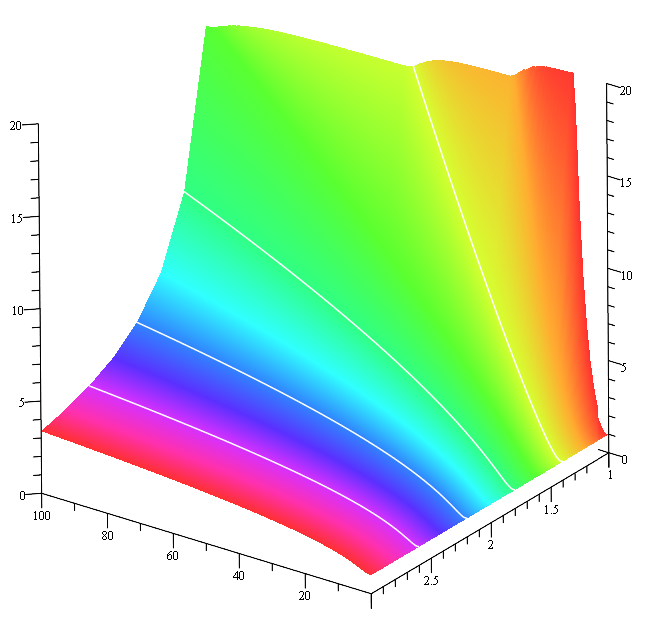}
\caption{The MAP sequence divided by the DRA sequence}
\label{fg:2}
\end{figure}
\end{remark}

\begin{example}[comparison MAP vs DR when $f(x)=R -\sqrt{R^2-x^2}$]
\ \\
Suppose that $R\in\RPP$ and that 
 $f(x) =R -\sqrt{R^2-x^2}$ on its domain $[-R,R]$. 
 According to Example~\ref{ex:mapball}, the
 MAP sequence $(x_n)_\nnn$ exhibits logarithmic convergence; in
 fact,
\begin{equation}
x_n \sim \frac{R}{\sqrt{n}}.
\end{equation}
We now turn to the DRA sequence $(x_n)_\nnn$.
By \eqref{e:DRx_n}, we have for every $\nnn$
\begin{equation}
x_n =x_{n+1} +\left(r_n +R
-\sqrt{R^2-x_{n+1}^2}\right)\frac{x_{n+1}}{\sqrt{R^2-x_{n+1}^2}}
=(r_n +R)\frac{x_{n+1}}{\sqrt{R^2-x_{n+1}^2}};
\end{equation} 
consequently, 
\begin{equation}
x_{n+1} =\frac{Rx_n}{\sqrt{(r_n+R)^2 +x_n^2}}.
\end{equation}
Since $f''(x) = R^2/(R^2-x^2)^{3/2}$, we
have $f''(0) = 1/R\in\RPP$ and
therefore, by \eqref{e:DRlim}, 
the DRA sequence 
\begin{equation}
\text{ $x_n \to 0$\; linearly }
\end{equation}
with rate $1/(1+r_\infty/R)$.
Once again, the DRA sequence converges much faster than the MAP
sequence!
\end{example}

Let us conclude. 
The results in this paper suggest that, for the convex feasibility problem, 
DRA outperforms MAP in cases of ``bad geometry'' 
(such as the absence of constraint qualifications or a ``zero angle''
between the constraints at the intersection). 
Since our proof techniques do not naturally generalize, 
it would be interesting to study these questions in higher-dimensional
space and other classes of convex sets. 

\section*{Acknowledgments}
HHB was partially supported by the Natural Sciences and
Engineering Research Council of Canada 
and by the Canada Research Chair Program.
He also acknowledges the hospitality and the support of 
Universit\'e de Toulouse, France
during the preparation of an early draft of this paper. 
MND was partially supported by an NSERC accelerator grant of HHB. 
HMP was partially supported by an internal grant of
University of Massachusetts Lowell.


\begin{appendices}
\section{}
\label{a:Stolz}
\begin{proof}[Proof of Fact~\ref{f:genStolz}]
(This proof is taken from 
\url{http://www.imomath.com/index.php?options=686} and included
here for completeness as we were not able to locate a book or journal
reference.)
The second inequality is obvious.
We only prove the right inequality since the proof of the left inequality
is similar. 
Without loss of generality, we assume that $b_n\to+\infty$ and that 
$\lambda = \varlimsup_\ntoinf \frac{a_{n+1}-a_n}{b_{n+1}-b_n}<\pinf$.
Let $\gamma\in\left]\lambda,\pinf\right[$. 
Then there exists $m\in\NN$ such that 
$(\forall n\geq m)$ $ \frac{a_{n+1}-a_n}{b_{n+1}-b_n}<\gamma$, 
i.e., $a_{n+1}-a_n < \gamma(b_{n+1}-b_n)$.
Let $n>m$.  Then $a_n-a_m < \gamma(b_n-b_m)$ and hence 
\begin{equation}
\frac{a_n}{b_n} - \frac{a_m}{b_n} < \gamma -
\gamma\frac{b_m}{b_n}.
\end{equation}
Taking $\varlimsup_{n\to\infty}$ yields
\begin{equation}
\varlimsup_\ntoinf \frac{a_n}{b_n} \leq \gamma.
\end{equation}
Now let $\gamma \downarrow \lambda$ to complete the proof.
\end{proof}

\section{}
\label{a:Guler}
\begin{proof}[Proof of Fact~\ref{f:Guler}]
(This proof is a special case taken from \cite{Guler} and included here for completeness.)
Denote the minimizers of $f$ by $Z$.
Let $x\in H$,
set $p=P_fx$, $p^*=x-p\in\partial f(p)$,
and assume that $p\notin Z$. 
We have
\begin{equation}\label{e:guler-a}
(\forall y\in H)\quad
f(y)-f(p)\geq\scal{p^*}{y-p}.
\end{equation}
Hence, for every $z\in Z$,
\begin{subequations}
\begin{align}
f(z)-f(p) &\geq \scal{p^*}{z-p}
= \scal{p^*}{z-x} + \scal{p^*}{x-p}\\
&\geq -\|p^*\|\|z-x\|+\|p^*\|^2
\geq -\|p^*\|\|z-x\|.
\end{align}
\end{subequations}
Since $f(p)\geq f(z)$, we learn that
\begin{equation}
\|p^*\|\geq\frac{f(p)-f(z)}{\|x-z\|}\geq 0.
\end{equation}
Setting $y=x$ in \eqref{e:guler-a}, we have
\begin{equation}
f(x)-f(p) \geq \scal{p^*}{x-p}=\|p^*\|^2.
\end{equation}
Therefore,
\begin{align}
\big(f(x)-f(z)\big) - \big(f(p)-f(z)\big) 
= f(x)-f(p)
\geq \|p^*\|^2
\geq \left( \frac{f(p)-f(z)}{\|x-z\|}\right)^2.
\end{align}
It follows that
\begin{equation}
f(x)-f(z) \geq 
\big( f(p)-f(z)\big)
\left(1 +  \frac{f(p)-f(z)}{\|x-z\|^2}\right);
\end{equation}
equivalently,
\begin{equation}
\frac{1}{f(x)-f(z)} 
\leq \frac{1}{ f(p)-f(z)}
\left(1 +  \frac{f(p)-f(z)}{\|x-z\|^2}\right)^{-1}.
\end{equation}
On the other hand, the definition of the proximal mapping
yields
\begin{equation}
f(p) \leq f(p) + \tfrac{1}{2}\|x-p\|^2
\leq f(z) + \tfrac{1}{2}\|x-z\|^2,
\end{equation}
which implies
\begin{equation}
\frac{f(p)-f(z)}{\|x-z\|^2} \leq \frac{1}{2}.
\end{equation}
Consider the function
$\alpha(t) = \frac{1}{1+t}$ with domain $\left]-1,\pinf\right[$. 
Clearly, $\alpha$ is convex, $\alpha(0) = 1$ and 
$\alpha(\frac{1}{2}) = \frac{2}{3}$.
The line described by $t\mapsto 1-\frac{2}{3}t$ 
goes through the same points and lies between these points
above the graph of $\alpha(\cdot)$ (by convexity). 
Hence 
\begin{equation}
\big(\forall 0\leq t \leq \tfrac{1}{2}\big)\quad
\frac{1}{1+t} \leq 1-\frac{2}{3}t.
\end{equation}
Altogether, we deduce that 
\begin{subequations}
\begin{align}
\varphi(x) &= \frac{1}{f(x)-f(z)} \\
&\leq 
\frac{1}{ f(p)-f(z)}
\left(1 - \frac{2}{3}\cdot \frac{f(p)-f(z)}{\|x-z\|^2} \right)\\
&= \varphi(p) - \frac{2}{3\|x-z\|^2}.
\end{align}
\end{subequations}
Now assume that $z\leftarrow x_{n+1} = P_fx_n$.
Then $\varphi(x_{n+1})-\varphi(x_n)\geq \frac{2}{3}\|x_n-z\|^{-2}\to\pinf$.
By Corollary~\ref{c:Cesaro}, 
$\lim\frac{\varphi(x_n)}{n} = \pinf$.
Now take reciprocals. 
\end{proof}

\end{appendices}


\begin{thebibliography}{999}

\sepp 

\bibitem{AR}
Y.I.\ Alber and S.\ Reich, 
An iterative method for solving a class of nonlinear operator
equations in Banach spaces, 
\emph{Panamerican Mathematical Journal}~4 (1994), 39--54. 

\bibitem{BBNPW}
H.H.\ Bauschke, J.Y.\ Bello Cruz, T.T.A.\ Nghia,
H.M.\ Phan, and X.\ Wang,
The rate of linear convergence of the Douglas--Rachford algorithm
for subspaces is the cosine of the Friedrichs angle,
\emph{Journal of Approximation Theory}
185 (2014), 63-79.

\bibitem{BC2011}
H.H.\ Bauschke and P.L.\ Combettes,
\emph{Convex Analysis and Monotone Operator Theory in Hilbert Spaces},
Springer, 2011.

\bibitem{josa}
H.H.\ Bauschke, P.L.\ Combettes, and D.R.\ Luke, 
Phase retrieval, error reduction algorithm, and Fienup variants:
a view from convex optimization,
\emph{Journal of the Optical Society of America}~19 (2002), 1334--1345. 

\bibitem{BLPW2}
H.H.\ Bauschke, D.R.\ Luke, H.M.\ Phan, and X.\ Wang,
Restricted normal cones and the method of alternating
projections: applications,
\emph{Set-Valued and Variational Analysis}~21 (2013), 475--501.

\bibitem{BNP}
H.H.\ Bauschke, D.\ Noll, and H.M.\ Phan,
Linear and strong convergence of algorithms involving averaged nonexpansive operators, 
\emph{Journal of  Mathematical Analysis and Applications}~421
(2015), 1--20.

\bibitem{BLY}
J.M\ Borwein, G.\ Li, and L.\ Yao,
Analysis of the convergence rate for the cyclic projection
algorithm applied to basic semialgebraic convex sets, 
\emph{SIAM Journal on Optimization}~24 (2014), 498--527. 

\bibitem{Bregman}
L.M.\ Bregman,
The method of successive projection for finding a common point of convex
sets,
\emph{Soviet Mathematics Doklady}~6 (1965), 688--692. 

\bibitem{DHsurvey}
F.\ Deutsch and H.\ Hundal,
Arbitrarily slow convergence sequences of linear operators:
a survey, in 
\emph{Fixed-Point Algorithms for Inverse Problems in Science and
Engineering}, Springer Optimization and Its Applications~49,
213--242, Springer, 2011.

\bibitem{Guler}
O.\ G\"uler,
On the convergence of the proximal point algorithm for convex
minimization,
\emph{SIAM Journal on Control and Optimization}~29 (1991),
403--419. 

\bibitem{LM}
P.-L.\ Lions and B.\ Mercier,
Splitting algorithms for the sum of two nonlinear operators,
\emph{SIAM Journal on Numerical Analysis}~16 (1979), 964--979. 

\bibitem{Martinet}
B.\ Martinet,
R\'egularisation d'in\'equations variationnelles par
approximations successives,
\emph{Rev.~Fran\c caise Informat.~Recherche Op\'erationnelle}~4 (1970),
154--158. 

\bibitem{Moreau}
J.-J.\ Moreau,
Proximit\'e et dualit\'e dans un espace hilbertien,
\emph{Bulletin de la Soci\'et\'e Math\'ematique de France}~93 (1965),
273--299. 

\bibitem{Hung}
H.M.\ Phan,
Linear convergence of the Douglas--Rachford method for two closed
sets, preprint 2014, \texttt{http://arxiv.org/abs/1401.6509}.

\bibitem{Rocky}
R.T.\ Rockafellar,
\emph{Convex Analysis},
Princeton University Press, 1970. 

\bibitem{Rockprox}
R.T.\ Rockafellar,
Monotone operators and the proximal point algorithm,
\emph{SIAM Journal on Control and Optimization}~14 (1976),
877--898. 

\end{thebibliography}
\end{document}